\makeatletter \@addtoreset{equation}{section} \makeatother
\newtheorem{theorem}{Theorem}[section]
\newtheorem{lemma}{Lemma}[section]
\journal{Journal}
\begin{document}

\begin{frontmatter}

\title{Ground state sign-changing solutions for a class of  nonlinear fractional Schr\"{o}dinger-Poisson system in $\mathbb{R}^{3}$}

\author{Chao Ji}
\address{Department of Mathematics, East China University of Science and Technology,  Shanghai, 200237, China}
\ead{jichao@ecust.edu.cn}

\begin{abstract}

In this paper, we are concerned with the existence of the least energy sign-changing solutions for the following fractional Schr\"{o}dinger-Poisson system:
\begin{align*}
 \left\{
\begin{aligned}
&(-\Delta)^{s} u+V(x)u+\lambda\phi(x)u=f(x, u),\quad &\text{in}\, \ \mathbb{R}^{3},\\
&(-\Delta)^{t}\phi=u^{2},& \text{in}\,\ \mathbb{R}^{3},
\end{aligned}
\right.
\end{align*}
where $\lambda\in \mathbb{R}^{+}$ is a parameter, $s, t\in (0, 1)$ and $4s+2t>3$, $(-\Delta)^{s}$ stands for the fractional Laplacian. By constraint variational method and quantitative
 deformation lemma, we prove that the above problem has one least energy sign-changing solution. Moreover, for any $\lambda>0$, we show that the energy of the least energy sign-changing solutions is strictly larger than two times the ground state energy.
 Finally, we consider $\lambda$ as a parameter and study the convergence property of  the least energy sign-changing solutions as $\lambda\searrow 0$.
\end{abstract}

\begin{keyword}
Fractional Schr\"{o}dinger-Poisson system, sign-changing solutions, constraint variational method, quantitative
 deformation lemma.

\MSC[2010]  35J61, 58E30.
\end{keyword}

\end{frontmatter}
\section{Introduction}\label{intro}
In this article, we are interested in the existence, energy property of the least energy sign-changing
solution $u_{\lambda}$ and a convergence property of $u_{\lambda}$ as $\lambda\searrow 0$ for the nonlinear  fractional Schr\"{o}dinger-Poisson system
\begin{align}
 \left\{
\begin{aligned}
&(-\Delta)^{s} u+V(x)u+\lambda\phi(x)u=f(x, u),\quad &\text{in}\, \ \mathbb{R}^{3},\\
&(-\Delta)^{t}\phi=u^{2},& \text{in}\,\ \mathbb{R}^{3}.
\end{aligned}
\right.
\end{align}
where $\lambda>0$ is a parameter, $s, t\in (0, 1)$ and $4s+2t>3$, $(-\Delta)^{s}$ stands for the fractional Laplacian
and the potential $V(x)$ satisfies the following assumptions:\\
\textbf{$(V_{1})$} $V\in C(\mathbb{R}^{3})$ satisfies $\underset{x\in \mathbb{R}^{3}}{\inf}V(x)\geq V_{0}>0$, where $V_{0}$ is a positive constant;\\
\textbf{$(V_{2})$} There exists $h>0$ such that $\underset{\vert y\vert\rightarrow\infty}{\lim}\text{meas}(\{x\in B_{h}(y): V(x)\leq c\})=0$ for any $c>0$;\\
where $B_{h}(y)$ denotes an open ball of $\mathbb{R}^{3}$ centered at $y$ with radius $h>0$, and $\text{meas}(A)$ denotes the Lebesgue measure of set $A$. Condition $(V_{2})$, which is weaker than the coercivity assumption: $V(x)\rightarrow \infty$ as $\vert x\vert\rightarrow \infty$, was originally introduced
by Bartsch and Wang in  \cite{rBW} to overcome the lack of compactness for the local elliptic equations and then was used by Pucci, Xia and Zhang in \cite{rPXZ} for the fractional Schr\"{o}dinger-Kirchhoff type equations. Moreover,
on the nonlinearity $f$ we assume that
\begin{itemize}
\item[$(f_1)$] $f: \mathbb{R}^{3}\times \mathbb{R}\rightarrow \mathbb{R}$ is a Carath\'{e}odory function and $f(x, u)=o(\vert u\vert)$ as $u\rightarrow 0$ for $x\in\mathbb{R}^{3}$ uniformly.

\item[$(f_2)$] For some $1<p<2_{s}^{*}-1$, there exits $C>0$ such that
\begin{equation*}
\vert f(x, u)\vert\leq C(1+\vert u\vert^{p}),
\end{equation*}
where $2_{s}^{*}=\frac{6}{3-2s}$.

\item[$(f_3)$] $\underset{t\rightarrow \infty}{\lim}\frac{F(x, t)}{t^{4}}=+\infty$, where $ F(t)=\int_{0}^{t}f(s)ds$.

\item[$(f_4)$] $\frac{f(x, t)}{\vert t\vert^{3}}$ is an increasing function of $t$ on $\mathbb{R}\setminus\{0\}$ for a.e. $x\in\mathbb{R}^{3}$.
\end{itemize}

When $s=t=1$, the system (1.1) reduces to the following Schr\"{o}dinger-Poisson system
\begin{align*}
 \left\{
\begin{aligned}
&-\Delta u+V(x)u+\lambda\phi(x)u=f(x, u),\quad &\text{in}\, \ \mathbb{R}^{3},\\
&-\Delta\phi=u^{2},& \text{in}\,\ \mathbb{R}^{3}.
\end{aligned}
\right.
\end{align*}
This kind of system has a strong physical meaning. For instance, they appear in quantum mechanics models (\cite{rBBL, rCL}), and in semiconductor theory(\cite{rBF1, rBF2}). For the research of Schr\"{o}dinger-Poisson system, we may refer to \cite{rIV, rJZ, rLPY, rR, rWZ}.

In recent years, there has been a great deal work dealing with the nonlinear equations or systems involving fractional Laplacian operators, which arise in
fractional quantum mechanics \cite{rL1, rL2}, physics and chemistry \cite{rMK}, obstacle problems \cite{rSi}, optimization and finance \cite{rCT} and so son. In the remarkable work of Caffarelli and Silvestre \cite{rCS}, the authors express this nonlocal operator $(-\Delta)^{s}$ as a Dirichlet-Neumann map for a certain elliptic boundary value problem with local differential operators defined on the upper boundary. This technique is a valid tool to deal with the equations involving fractional operators in the respects of regularity and variational methods. For some results on the fractional differential equations, we refer to \cite{rCW, rMR, rPXZ, rZZX, rZZR}. Recently, Using the method in \cite{rCS} and variational method, in \cite{rT}, Teng studied the ground state for the fractional Schr\"{o}dinger-Poisson system with critical Sobolev exponent.  To the best of our knowledge, there are few papers which
considered the least energy sign-changing solutions of system (1.1). In \cite{rSh}, Combining constraint variational methods and quantitative deformation lemma, Shuai firstly studied the least energy sign-changing solutions for a class of Kirchhoff problems in bounded domain, where a stronger condition that $f\in C^{1}$ was assumed.
In virtue of the fractional operator and Poisson equation are included in (1.1), our problem is more complicated and difficult.

Now we recall the fractional Sobolev spaces. We firstly define the homogeneous fractional Sobolev space $\mathcal{D}^{\alpha, 2}(\mathbb{R}^{3})$ as follows\\
\begin{equation*} \label{space}
 \mathcal{D}^{\alpha, 2}(\mathbb{R}^{3})=\Big\{u\in L^{2_{\alpha}^{*}}(\mathbb{R}^{3}): \frac{\vert u(x)-u(y)\vert}{\vert x-y\vert^{\frac{3}{2}+\alpha}}\in L^{2}(\mathbb{R}^{3}\times \mathbb{R}^{3})\Big\}
\end{equation*}
which is the completion of $\mathcal{C}_{0}^{\infty}(\mathbb{R}^{3})$ under the norm\\
\begin{equation*} \label{space}
 \Vert u\Vert_{ \mathcal{D}^{\alpha, 2}(\mathbb{R}^{3})}=\Vert (-\Delta)^{\frac{\alpha}{2}} u\Vert_{L^{2}(\mathbb{R}^{3})}=\Big(\iint_{\mathbb{R}^{3}\times\mathbb{R}^{3}}\frac{\vert u(x)-u(y)\vert^{2}}{\vert x-y\vert^{3+2\alpha}}dxdy\Big)^{\frac{1}{2}}.
\end{equation*}

The embedding $ \mathcal{D}^{\alpha, 2}(\mathbb{R}^{3})\hookrightarrow L^{2_{\alpha}^{*}}$ is continuous and for any $\alpha\in (0, 1)$, there
exists a best constant $\mathcal{S}_{\alpha}>0$ such that
\begin{equation} \label{space}
\mathcal{S}_{\alpha}=\underset{u\in \mathcal{D}^{\alpha, 2}(\mathbb{R}^{3})}{\inf}=\frac{\int_{\mathbb{R}^{3}} \vert (-\Delta)^{\frac{\alpha}{2}}u\vert^{2}dx}{\Big(\int_{\mathbb{R}^{3}} \vert u(x)\vert^{2_{\alpha}^{*}}dx\Big)^{\frac{2}{2_{\alpha}^{*}}}}.
\end{equation}

The fractional Sobolev space $H^{\alpha}(\mathbb{R}^{3})$ is defined by
\begin{equation*} \label{space}
 H^\alpha(\mathbb{R}^{3})=\Big\{u\in L^{2}(\mathbb{R}^{3}): \frac{\vert u(x)-u(y)\vert}{\vert x-y\vert^{\frac{3}{2}+\alpha}}\in L^{2}(\mathbb{R}^{3}\times \mathbb{R}^{3})\Big\},
\end{equation*}
endowed with the norm
\begin{equation*} \label{space}
 \Vert u\Vert_{H^\alpha(\mathbb{R}^{3})}=\Big(\iint_{\mathbb{R}^{3}\times\mathbb{R}^{3}}\frac{\vert u(x)-u(y)\vert^{2}}{\vert x-y\vert^{3+2\alpha}}dxdy+\int_{\mathbb{R}^{3}} \vert u\vert^{2}dx\Big)^{\frac{1}{2}}.
\end{equation*}

In this paper, we denote the fractional Sobolev space for (1.1) by
\begin{equation*} \label{space}
H= \Big\{u\in H^s(\mathbb{R}^{3}): \int_{\mathbb{R}^{3}}V(x)u^{2}dx< \infty\Big\},
\end{equation*}
with the norm
\begin{equation*} \label{space}
 \Vert u\Vert=\Big(\iint_{\mathbb{R}^{3}\times\mathbb{R}^{3}}\frac{\vert u(x)-u(y)\vert^{2}}{\vert x-y\vert^{3+2s}}dxdy+\int_{\mathbb{R}^{3}}  V(x)u^{2}dx\Big)^{\frac{1}{2}}.
\end{equation*}

In the sequel, we need the following embedding lemma which is a special case of Lemma 1 in \cite{rPXZ}, so we omit its proof.

\begin{lemma}
$(i)$ Suppose that $(V_{1})$ holds. Let $q\in [2, 2_{s}^{*}]$, then the embeddings
\begin{equation*}
H\hookrightarrow H^{s}(\mathbb{R}^{3})\hookrightarrow L^{q}(\mathbb{R}^{3})
\end{equation*}
are continuous, with $\min\{1, V_{0}\}\Vert u\Vert_{H^s(\mathbb{R}^{3})}^{2}\leq \Vert u\Vert^{2}$ for all $u\in H$. In particular, there exists a constant
$C_{q}>0$ such that
\begin{equation*}
\Vert u\Vert_{L^q(\mathbb{R}^{3})}\leq C_{q}\Vert u\Vert\quad \text{for all}\,\, u\in H.
\end{equation*}
Moreover, if $q\in[1, 2_{s}^{*})$, then the embedding $H\hookrightarrow\hookrightarrow L^{q}(B_{R})$ is compact for any $R>0$.\\
$(ii)$ Suppose that $(V_{1})-(V_{2})$ hold. Let $q\in [2, 2_{s}^{*})$ be fixed and $\{u_{n}\}$ be a bounded sequence in $H$, then there exists $u\in H\cap L^{q}(\mathbb{R}^{N})$ such that, up to a subsequence,
\begin{equation*}
 u_{n}\rightarrow u \quad \text{strongly in}\,\, L^q(\mathbb{R}^{3})\, \text{as}\,\, n\rightarrow \infty.
\end{equation*}
\end{lemma}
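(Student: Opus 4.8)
The statement is presented as a special case of a result already in the literature (Lemma 1 of \cite{rPXZ}), so the proof is essentially a matter of assembling standard fractional Sobolev facts; the authors themselves say they omit it. Nonetheless, here is how I would organize the argument if it had to be written out.

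First I would establish part $(i)$. The inequality $\min\{1,V_0\}\Vert u\Vert_{H^s(\R^3)}^2\le\Vert u\Vert^2$ is immediate from $(V_1)$: splitting $\Vert u\Vert^2$ into the Gagliardo seminorm plus $\int V(x)u^2\,dx$ and bounding $\int V(x)u^2\ge V_0\int u^2\ge\min\{1,V_0\}\int u^2$, while the seminorm term is bounded below by $\min\{1,V_0\}$ times itself, gives the claim after adding. This shows $H\hookrightarrow H^s(\R^3)$ continuously. The embedding $H^s(\R^3)\hookrightarrow L^q(\R^3)$ for $q\in[2,2^*_s]$ is the classical fractional Sobolev embedding (continuous, by interpolation between $L^2$ and $L^{2^*_s}$ using the inequality built from $\mathcal S_s$ in (1.3)); composing the two gives the constant $C_q$. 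For the local compactness $H\hookrightarrow\hookrightarrow L^q(B_R)$ with $q\in[1,2^*_s)$, I would use that $H^s(B_R)\hookrightarrow\hookrightarrow L^q(B_R)$ by the compact fractional Rellich--Kondrachov theorem on bounded domains, and that the restriction map $H\to H^s(B_R)$ is bounded.

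For part $(ii)$, which is the substantive point and the main obstacle, the goal is to upgrade local compactness to global strong convergence in $L^q(\R^3)$, $q\in[2,2^*_s)$, using $(V_2)$. Given a bounded sequence $\{u_n\}\subset H$, pass to a subsequence with $u_n\rightharpoonup u$ weakly in $H$, hence in $H^s(\R^3)$, and $u_n\to u$ strongly in $L^q(B_R)$ for every $R$ by part $(i)$. The key is to control the mass of $u_n-u$ outside large balls: for any $c>0$ write, over $\R^3\setminus B_R$, the split into the region where $V(x)>c$ and the region where $V(x)\le c$. On $\{V>c\}$ one has $\int_{\{V>c\}}|u_n-u|^2\le c^{-1}\int V(x)|u_n-u|^2\le c^{-1}C$ uniformly, so this piece is small by choosing $c$ large. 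On $\{V\le c\}\cap(\R^3\setminus B_R)$, condition $(V_2)$ forces the Lebesgue measure of this set, intersected with any unit ball far from the origin, to vanish; covering $\R^3\setminus B_R$ by unit balls and using H\"older together with the uniform $L^{2^*_s}$ bound (from part $(i)$) shows $\int_{\{V\le c\}\setminus B_R}|u_n-u|^q\to 0$ as $R\to\infty$, uniformly in $n$. This is precisely the vanishing-at-infinity argument of Bartsch--Wang \cite{rBW} adapted to the fractional setting as in \cite{rPXZ}. Combining the three contributions — small tail from $\{V>c\}$, small tail from $\{V\le c\}$ far out, and strong $L^q$ convergence on the fixed large ball $B_R$ — gives $u_n\to u$ in $L^q(\R^3)$ after letting first $n\to\infty$ and then $R\to\infty$, $c\to\infty$. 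The delicate step is making the interpolation on $\{V\le c\}\setminus B_R$ uniform in $n$; everything else is routine.
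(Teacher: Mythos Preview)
Your proposal is correct and aligns with what the paper intends: the paper itself omits the proof entirely, citing it as a special case of Lemma~1 in \cite{rPXZ}, and your sketch faithfully reconstructs the Bartsch--Wang vanishing-at-infinity argument (originating in \cite{rBW}) adapted to the fractional setting that underlies that reference. The only minor refinement worth noting is that on the set $\{V>c\}$ you control the $L^2$ tail rather than the $L^q$ tail directly; the cleanest way to close the argument is to first prove strong $L^2(\R^3)$ convergence and then interpolate with the uniform $L^{2^*_s}$ bound to reach all $q\in[2,2^*_s)$, but this is a cosmetic point and your outline is sound.
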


Assume that $s, t\in (0, 1)$, if $4s+2t\geq 3$, there holds $2\leq \frac{12}{3+2t} \leq \frac{6}{3-2s}$ and thus $H\hookrightarrow L^{\frac{12}{3+2t}}(\mathbb{R}^{3})$ by Lemma 1.1. For $u\in H$, the linear functional $\mathcal{L}_{u}:\mathcal{D}^{t, 2}(\mathbb{R}^{3})\rightarrow \mathbb{R}$ is defined by
\begin{equation*} \label{space}
 \mathcal{L}_{u}(v)=\int_{\mathbb{R}^{3}} u^{2}vdx,
\end{equation*}
the H\"{o}lder's inequality and (1.2) implies that
\begin{equation*} \label{space}
 \vert \mathcal{L}_{u}(v)\vert \leq \Big(\int_{\mathbb{R}^{3}} \vert u(x)\vert^{\frac{12}{3+2t}}dx\Big)^{\frac{3+2t}{6}}\Big(\int_{\mathbb{R}^{3}} \vert v(x)\vert^{2_{t}^{*}}dx\Big)^{\frac{1}{2_{t}^{*}}}\leq C\Vert u\Vert^{2}\Vert v\Vert_{ \mathcal{D}^{t, 2}(\mathbb{R}^{3})}.
\end{equation*}
 By the Lax-Milgram theorem, there exists a unique $\phi_{u}^{t}\in \mathcal{D}^{t, 2}(\mathbb{R}^{3})$ such that
\begin{equation*}
\int_{\mathbb{R}^{3}}(-\Delta)^{\frac{t}{2}}u(-\Delta)^{\frac{t}{2}}vdx=\int_{\mathbb{R}^{3}}u^{2}vdx,\quad  \forall v\in \mathcal{D}^{t, 2}(\mathbb{R}^{3}),
\end{equation*}
 that is $\phi_{u}^{t}$ is the weak solution of
 \begin{equation*}
 (-\Delta)^{t}\phi_{u}^{t}=u^{2},\quad x\in\mathbb{R}^{3}
 \end{equation*}
 and the representation formula holds
\begin{equation*}
\phi_{u}^{t}(x)=c_{t}\int_{\mathbb{R}^{3}} \frac{u^{2}(y)}{\vert x-y\vert^{3-2t}}dy,\quad  x\in\mathbb{R}^{3},
\end{equation*}
 which is called t-Riesz potential, where
 \begin{equation}
c_{t}=\pi^{-\frac{3}{2}}2^{-2t}\frac{\Gamma(3-2t)}{\Gamma(t)}.
\end{equation}

In the sequel, we often omit the constant $c_{t}$ for convenience in (1.3). The properties of the function $\phi_{u}^{t}$  are given as follows.

\begin{lemma}[\cite{rT}]
If $4s+2t\geq 3$, then for any  $u\in H^s(\mathbb{R}^{3})$, we have\\
$(1)$$\phi_{u}^{t}: H^s(\mathbb{R}^{3})\rightarrow \mathcal{D}^{t, 2}(\mathbb{R}^{3})$ is continuous and maps bounded sets into bounded maps;\\
$(2)$$\int_{\mathbb{R}^{3}}\phi_{u}^{t}{u}^{2}dx\leq \mathcal{S}_{t}^{2}\Vert u\Vert^{4}_{L^{\frac{12}{3+2t}}}$;\\
$(3)$$\phi_{\tau u}^{t}=\tau^{2}\phi_{u}^{t}$ for all $\tau\in \mathbb{R}$, $\phi_{u(\cdot+y)}^{t}=\phi_{u}^{t}(x+y)$;\\
$(4)$$\phi_{u_{\theta}}=\theta^{2s}(\phi_{u}^{t})_{\theta}$ for all $\theta>0$, where $u_{\theta}=u(\frac{\cdot}{\theta});$\\
$(5)$If $u_{n}\rightharpoonup u$ in $H^s(\mathbb{R}^{3})$ then $\phi_{u_{n}}^{t}\rightharpoonup \phi_{u}^{t}$ in $\mathcal{D}^{t, 2}(\mathbb{R}^{3})$;\\
$(6)$ If $u_{n}\rightarrow u$ in $H^s(\mathbb{R}^{3})$ then $\phi_{u_{n}}^{t}\rightarrow \phi_{u}^{t}$ in $\mathcal{D}^{t, 2}(\mathbb{R}^{3})$ and
$\int_{\mathbb{R}^{3}}\phi_{u_{n}}^{t}{u_{n}}^{2}dx\rightarrow \int_{\mathbb{R}^{3}}\phi_{u}^{t}{u}^{2}dx$.
\end{lemma}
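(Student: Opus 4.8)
The proof rests entirely on the variational characterization of $\phi_{u}^{t}$ supplied by the Lax--Milgram construction: $\phi_{u}^{t}\in\mathcal{D}^{t,2}(\mathbb{R}^{3})$ is the unique function with
\[
\int_{\mathbb{R}^{3}}(-\Delta)^{\frac{t}{2}}\phi_{u}^{t}\,(-\Delta)^{\frac{t}{2}}v\,dx=\int_{\mathbb{R}^{3}}u^{2}v\,dx,\qquad\forall\,v\in\mathcal{D}^{t,2}(\mathbb{R}^{3}),
\]
so that, taking $v=\phi_{u}^{t}$, one has the basic identity $\Vert\phi_{u}^{t}\Vert_{\mathcal{D}^{t,2}}^{2}=\int_{\mathbb{R}^{3}}\phi_{u}^{t}u^{2}\,dx$. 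Since $4s+2t\geq 3$ we have $2\leq\frac{12}{3+2t}\leq 2_{s}^{*}$, so $H^{s}(\mathbb{R}^{3})\hookrightarrow L^{\frac{12}{3+2t}}(\mathbb{R}^{3})$ by Lemma 1.1, and a direct computation gives $(2_{t}^{*})'=\frac{6}{3+2t}$, hence $\Vert u^{2}\Vert_{L^{(2_{t}^{*})'}}=\Vert u\Vert_{L^{12/(3+2t)}}^{2}$.

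For $(2)$ and the boundedness in $(1)$ I would combine this identity with H\"older's inequality and the Sobolev inequality $\Vert w\Vert_{L^{2_{t}^{*}}}^{2}\leq\mathcal{S}_{t}^{-1}\Vert w\Vert_{\mathcal{D}^{t,2}}^{2}$ to get
\[
\Vert\phi_{u}^{t}\Vert_{\mathcal{D}^{t,2}}^{2}=\int_{\mathbb{R}^{3}}\phi_{u}^{t}u^{2}\,dx\leq\Vert u\Vert_{L^{12/(3+2t)}}^{2}\Vert\phi_{u}^{t}\Vert_{L^{2_{t}^{*}}}\leq\mathcal{S}_{t}^{-1/2}\Vert u\Vert_{L^{12/(3+2t)}}^{2}\Vert\phi_{u}^{t}\Vert_{\mathcal{D}^{t,2}},
\]
whence $\Vert\phi_{u}^{t}\Vert_{\mathcal{D}^{t,2}}\leq\mathcal{S}_{t}^{-1/2}\Vert u\Vert_{L^{12/(3+2t)}}^{2}$ and, substituting back, the estimate of $(2)$ (up to the normalization chosen for $\mathcal{S}_{t}$); together with $H^{s}\hookrightarrow L^{12/(3+2t)}$ this shows $\phi_{u}^{t}$ sends bounded sets to bounded sets. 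For continuity, given $u_{n}\to u$ in $H^{s}(\mathbb{R}^{3})$, the function $\phi_{u_{n}}^{t}-\phi_{u}^{t}$ solves the same equation with right-hand side $u_{n}^{2}-u^{2}$; testing with $\phi_{u_{n}}^{t}-\phi_{u}^{t}$, writing $u_{n}^{2}-u^{2}=(u_{n}-u)(u_{n}+u)$ and applying H\"older with two factors in $L^{12/(3+2t)}$ yields $\Vert\phi_{u_{n}}^{t}-\phi_{u}^{t}\Vert_{\mathcal{D}^{t,2}}\leq\mathcal{S}_{t}^{-1/2}\Vert u_{n}-u\Vert_{L^{12/(3+2t)}}\Vert u_{n}+u\Vert_{L^{12/(3+2t)}}\to 0$.

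Parts $(3)$ and $(4)$ are algebraic consequences of the uniqueness in the construction. Since $(-\Delta)^{t}$ is linear, $(-\Delta)^{t}(\tau^{2}\phi_{u}^{t})=\tau^{2}u^{2}=(\tau u)^{2}$, so $\phi_{\tau u}^{t}=\tau^{2}\phi_{u}^{t}$; translation invariance of $(-\Delta)^{t}$ gives $\phi_{u(\cdot+y)}^{t}(x)=\phi_{u}^{t}(x+y)$ in the same way. The scaling identity is cleanest from the Riesz representation $\phi_{u}^{t}(x)=c_{t}\int_{\mathbb{R}^{3}}u^{2}(y)|x-y|^{-(3-2t)}\,dy$: the substitution $y\mapsto\theta y$ in $\phi_{u_{\theta}}^{t}$ produces a power of $\theta$ (namely $\theta^{2t}$) together with the rescaled argument, i.e. $\phi_{u_{\theta}}^{t}=\theta^{2t}(\phi_{u}^{t})_{\theta}$.

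Finally, for $(5)$, if $u_{n}\rightharpoonup u$ in $H^{s}(\mathbb{R}^{3})$ then $\{u_{n}\}$ and hence, by the bound above, $\{\phi_{u_{n}}^{t}\}$ is bounded in $\mathcal{D}^{t,2}(\mathbb{R}^{3})$, so along a subsequence $\phi_{u_{n}}^{t}\rightharpoonup\psi$ there; passing to the limit in the weak formulation tested against an arbitrary $v\in C_{0}^{\infty}(\mathbb{R}^{3})$ identifies $\psi=\phi_{u}^{t}$, the term $\int u_{n}^{2}v\to\int u^{2}v$ being handled by the compact embedding $H^{s}\hookrightarrow\hookrightarrow L^{2}(\mathrm{supp}\,v)$ and $u_{n}^{2}-u^{2}=(u_{n}-u)(u_{n}+u)$; uniqueness of the weak limit upgrades this to convergence of the full sequence. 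For $(6)$, one combines the continuity from $(1)$ with the splitting $\int\phi_{u_{n}}^{t}u_{n}^{2}-\int\phi_{u}^{t}u^{2}=\int(\phi_{u_{n}}^{t}-\phi_{u}^{t})u_{n}^{2}+\int\phi_{u}^{t}(u_{n}^{2}-u^{2})$, estimating each term by H\"older and the Sobolev inequality and using $\phi_{u_{n}}^{t}\to\phi_{u}^{t}$ in $\mathcal{D}^{t,2}$ together with $u_{n}\to u$ in $L^{12/(3+2t)}$. The only genuine difficulty is the failure of the embedding $H^{s}(\mathbb{R}^{3})\hookrightarrow L^{q}(\mathbb{R}^{3})$ to be compact on all of $\mathbb{R}^{3}$, which is why $(5)$ requires the localization to $\mathrm{supp}\,v$; the remaining arguments are routine duality and H\"older bookkeeping.
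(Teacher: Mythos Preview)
The paper does not prove this lemma at all: it is quoted verbatim from \cite{rT} and no argument is supplied, so there is no ``paper's proof'' to compare against. Your proposal is a correct and self-contained verification of all six items, relying on the Lax--Milgram characterization, H\"older's inequality, the Sobolev embedding for $\mathcal{D}^{t,2}$, and the Riesz-potential representation --- exactly the standard tools one would expect.

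Two remarks worth recording. First, in $(2)$ your computation gives $\int\phi_{u}^{t}u^{2}\leq\mathcal{S}_{t}^{-1}\Vert u\Vert_{L^{12/(3+2t)}}^{4}$, whereas the stated bound has $\mathcal{S}_{t}^{2}$; you are right that this is a normalization issue (or a misprint in the statement), and your constant is the one that actually follows from the definition of $\mathcal{S}_{t}$ in (1.2). Second, in $(4)$ you obtain the exponent $2t$ rather than the $2s$ printed in the statement, and your change-of-variables computation is correct: with $u_{\theta}=u(\cdot/\theta)$ the Riesz formula forces $\phi_{u_{\theta}}^{t}=\theta^{2t}(\phi_{u}^{t})_{\theta}$, so the $2s$ in the lemma is a typo.
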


If we substitute $\phi_{u}^{t}$ in (1.1), it leads to the following fractional
Schr\"{o}dinger equation
\begin{equation}
(-\Delta)^{s} u+V(x)u+\lambda\phi_{u}^{t}u=f(x, u),\quad \text{in}\, \ \mathbb{R}^{3},\\
\end{equation}
whose solutions are the critical points of the functional $I_{\lambda}: H\rightarrow \mathbb{R}$ defined by
\begin{equation*}
I_{\lambda}(u)=\frac{1}{2}\int_{\mathbb{R}^{3}}(\vert (-\Delta)^{\frac{s}{2}}u\vert^{2}+V(x)  u^{2})dx+\frac{\lambda}{4}\int_{\mathbb{R}^{3}}\phi_{u}^{t}u^{2}dx-\int_{\mathbb{R}^{3}}F(x, u)dx
\end{equation*}
where $F(x, u)=\int_{0}^{u}f(x, r)dr$. The functional  $I_{\lambda}\in C^{1}(H, \mathbb{R})$  and for any $v\in H$
\begin{equation*}
\langle I_{\lambda}'(u), v\rangle=\int_{\mathbb{R}^{3}}\Big((-\Delta)^{\frac{s}{2}}u(-\Delta)^{\frac{s}{2}}v+V(x)uv\Big)dx+\lambda\int_{\mathbb{R}^{3}}\phi_{u}^{t}uv dx-\int_{\mathbb{R}^{3}}f(x,u)v dx.
\end{equation*}
We call $u$ a least energy sign-changing solution to problem (1.1) if $u$ is a solution of problem (1.4) with $u^{\pm}\neq 0$ and
\begin{equation*}
I_{\lambda}(u)=\inf \{I_{\lambda}(v)\mid v^{\pm}\neq 0, I'_{\lambda}(v)=0\},
\end{equation*}
where $v^{+}=\max\{v(x), 0\}$ and $v^{-}=\min\{v(x), 0\}$.

For problem (1.4),  due to the effect of the nonlocal term $\phi_{u}^{t}$ and $(-\Delta)^{s} u$, that is
\begin{equation*}
\int_{\mathbb{R}^{3}}\Big((-\Delta)^{\frac{s}{2}}u^{+}(-\Delta)^{\frac{s}{2}}u^{-}dx>0\quad\text{and}\quad
\int_{\mathbb{R}^{3}}\phi_{u}^{t}u^{2}dx>\int_{\mathbb{R}^{3}}\phi_{u^{+}}^{t}{u^{+}}^{2}+\int_{\mathbb{R}^{3}}\phi_{u^{-}}^{t}{u^{-}}^{2}
\end{equation*}
for $u^{\pm}\neq 0$, a straightforward computation yields that
\begin{equation*}
I_{\lambda}(u)>I_{\lambda}(u^{+})+ I_{\lambda}(u^{-}),
\end{equation*}
\begin{equation*}
\langle I_{\lambda}'(u), u^{+}\rangle>\langle I_{\lambda}'(u^{+}), u^{+}\rangle,\quad \text{and}\quad \langle I_{\lambda}'(u), u^{-}\rangle>\langle I_{\lambda}'(u^{-}), u^{-}\rangle.
\end{equation*}
So the methods to obtain sign-changing solutions of the local problems and to estimate the energy of the sign-changing solutions seem not suitable for our nonlocal one (1.4). In order to get a sign-changing solution for problem (1.4),  we firstly try to seek a minimizer of the energy functional $I_{\lambda}$ over the following constraint:
\begin{equation*}
\mathcal{M}_{\lambda}=\{u\in H: u^{\pm}\neq 0, \langle I_{\lambda}'(u), u^{+}\rangle=\langle I_{\lambda}'(u), u^{-}\rangle=0\}
\end{equation*}
and then we show that the minimizer is a sign-changing solution of (1.4). To show that the minimizer of the constrained problem is a sign-changing solution, we will use the quantitative deformation lemma and degree theory.

The following are the main results of this paper.

\begin{theorem}\label{gt621}
Let $(f_{1})-(f_{4})$ and $(V_{1})-(V_{2})$ hold. Then for any $ \lambda>0$, problem (1.1) has a least energy sign-changing solution $u_{\lambda}$, which has precisely two
nodal domains.
\end{theorem}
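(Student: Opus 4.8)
The plan is to follow the now-classical constrained minimization scheme of Shuai, adapted to the nonlocal fractional setting. First I would establish the basic structure of the constraint set $\mathcal{M}_{\lambda}$: given any $u\in H$ with $u^{\pm}\neq 0$, I claim there exists a unique pair $(\alpha,\beta)\in(0,\infty)^2$ such that $\alpha u^{+}+\beta u^{-}\in\mathcal{M}_{\lambda}$. This is the key preliminary lemma. The two equations $\langle I_{\lambda}'(\alpha u^{+}+\beta u^{-}),\alpha u^{+}\rangle=0$ and $\langle I_{\lambda}'(\alpha u^{+}+\beta u^{-}),\beta u^{-}\rangle=0$ form a system
\begin{align*}
&\alpha^{2}\|u^{+}\|^{2}+\alpha^{2}\beta^{2}\!\int_{\mathbb{R}^{3}}\!\phi_{u^{-}}^{t}(u^{+})^{2}dx+\lambda\alpha^{4}\!\int_{\mathbb{R}^{3}}\!\phi_{u^{+}}^{t}(u^{+})^{2}dx=\int_{\mathbb{R}^{3}}\!f(x,\alpha u^{+})\alpha u^{+}dx,\\
&\beta^{2}\|u^{-}\|^{2}+\alpha^{2}\beta^{2}\!\int_{\mathbb{R}^{3}}\!\phi_{u^{+}}^{t}(u^{-})^{2}dx+\lambda\beta^{4}\!\int_{\mathbb{R}^{3}}\!\phi_{u^{-}}^{t}(u^{-})^{2}dx=\int_{\mathbb{R}^{3}}\!f(x,\beta u^{-})\beta u^{-}dx,
\end{align*}
where I have used $\int (-\Delta)^{s/2}u^{+}(-\Delta)^{s/2}u^{-}dx\le 0$ to reduce the diagonal terms (actually this cross term is negative, so it must be carried as part of $\|u^\pm\|^2$-type expressions; I would keep it explicitly). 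Existence of $(\alpha,\beta)$ comes from a topological/degree argument on the vector field $G(\alpha,\beta)=(\langle I_{\lambda}'(\alpha u^{+}+\beta u^{-}),\alpha u^{+}\rangle,\langle I_{\lambda}'(\alpha u^{+}+\beta u^{-}),\beta u^{-}\rangle)$: using $(f_1)$--$(f_2)$ the components are positive for $(\alpha,\beta)$ small, and using $(f_3)$ (superquartic growth of $F$) they are negative for $(\alpha,\beta)$ large along each axis, so Miranda's theorem or Brouwer degree yields a zero in the interior. Uniqueness uses the monotonicity condition $(f_4)$: the function $t\mapsto f(x,t)/|t|^3$ increasing makes the map strictly monotone in a suitable sense, a standard but slightly delicate computation comparing two solutions.

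Next I would define $m_{\lambda}=\inf_{\mathcal{M}_{\lambda}}I_{\lambda}$ and show it is attained. Using $(f_1)$--$(f_4)$ and the identity $I_{\lambda}(u)=I_{\lambda}(u)-\frac14\langle I_{\lambda}'(u),u\rangle$ one obtains on $\mathcal{M}_{\lambda}$ that $I_{\lambda}(u)\ge c\|u\|^2$ for some $c>0$ (the quartic $\phi$-term drops out, and $(f_4)$ implies $\frac14 f(x,t)t-F(x,t)\ge 0$), so $m_{\lambda}\ge 0$; in fact $m_\lambda>0$ because any $u\in\mathcal M_\lambda$ is bounded away from $0$ in $H$. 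For a minimizing sequence $\{u_n\}\subset\mathcal{M}_{\lambda}$, coercivity of $I_\lambda$ on $\mathcal M_\lambda$ (again via $(f_3)$, $(f_4)$) gives boundedness in $H$; then Lemma 1.1(ii) gives $u_n^{\pm}\to u^{\pm}$ strongly in $L^q$ for $q\in[2,2_s^*)$, and Lemma 1.2(6) handles the convergence of the Poisson terms. One checks $u^{\pm}\neq 0$ (the $L^q$-convergence plus the lower bound $\|u_n^\pm\|^q_{L^q}\gtrsim 1$ coming from membership in $\mathcal M_\lambda$ together with $(f_1)$--$(f_2)$). Applying the first lemma to $u$ produces $(\alpha,\beta)$ with $\alpha u^{+}+\beta u^{-}\in\mathcal M_\lambda$; by weak lower semicontinuity and the fact that $(1,1)$ is the (unique) maximizing pair for the relevant two-variable function, one deduces $\alpha,\beta\le 1$ hence $I_{\lambda}(\alpha u^{+}+\beta u^{-})\le \liminf I_\lambda(u_n)=m_\lambda$, forcing $\alpha=\beta=1$ and so the infimum is attained at $u$.

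Then I would show the minimizer $u$ is in fact a critical point of $I_\lambda$, i.e. $I_\lambda'(u)=0$. This is the step I expect to be the main obstacle, and it is where the quantitative deformation lemma enters. Suppose $I_{\lambda}'(u)\neq 0$; then there are $\delta>0$ and $\mu>0$ with $\|I_{\lambda}'(v)\|\ge\mu$ for $v$ in a ball around $u$. On the two-dimensional disc $D=\{(\alpha,\beta):|\alpha-1|,|\beta-1|\le\rho\}$ the map $(\alpha,\beta)\mapsto \alpha u^{+}+\beta u^{-}$ has the property that $I_\lambda$ attains a strict maximum $m_\lambda$ only at $(1,1)$; applying the quantitative deformation lemma to push down the sublevel set slightly via a deformation $\eta$ supported near $u$, the composed map $g(\alpha,\beta)=\eta(\alpha u^{+}+\beta u^{-})$ still has $g=\mathrm{id}$ on $\partial D$ after the homotopy, so a degree argument (the Brouwer degree of $G\circ(\text{restriction})$ is nonzero, as computed in Step 1) forces $g(D)$ to meet $\mathcal{M}_\lambda$; but $I_\lambda<m_\lambda$ on $g(D)$, contradicting the definition of $m_\lambda$. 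Hence $I_\lambda'(u)=0$ and $u$ is a sign-changing solution with $I_\lambda(u)=m_\lambda$, which is least among sign-changing solutions by construction. Finally, for the two–nodal–domain claim: if $u=u_1+u_2+u_3$ with $u_i$ having pairwise disjoint supports, $u_1>0$, $u_2<0$, $u_3\neq 0$, write $v=u_1+u_2$; then $v^{\pm}\neq 0$ and using the strict nonlocal inequalities displayed before Theorem 1.3 one gets $\langle I_\lambda'(v),v^{\pm}\rangle<0$, so the pair $(\alpha,\beta)$ attached to $v$ satisfies $\alpha,\beta\in(0,1)$, giving $m_\lambda\le I_\lambda(\alpha v^{+}+\beta v^{-})<I_\lambda(v)\le I_\lambda(u)-I_\lambda(u_3)<m_\lambda$ (after also noting $I_\lambda(u_3)>0$), a contradiction; therefore $u_3=0$ and $u$ has exactly two nodal domains.
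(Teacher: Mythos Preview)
Your overall plan is the same as the paper's: Miranda's theorem for existence of the scaling pair, $(f_4)$ for uniqueness and for the maximum characterization of $(\alpha_u,\beta_u)$, compactness via $(V_1)$--$(V_2)$ for attainment of $m_\lambda$, the quantitative deformation lemma plus Brouwer degree for the critical-point step, and a contradiction for the nodal-domain count. Two specific points, however, would fail as you have written them.

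First, the fractional cross term has the \emph{opposite} sign from what you state: since $u^{+}(x)u^{-}(x)=0$ pointwise and $u^{+}\ge 0$, $u^{-}\le 0$,
\[
\int_{\mathbb{R}^3}(-\Delta)^{s/2}u^{+}\,(-\Delta)^{s/2}u^{-}\,dx
=-\iint_{\mathbb{R}^3\times\mathbb{R}^3}\frac{u^{+}(x)u^{-}(y)+u^{+}(y)u^{-}(x)}{|x-y|^{3+2s}}\,dx\,dy>0.
\]
This strict positivity (recorded in the paper just before the definition of $\mathcal{M}_\lambda$) is exactly what drives the uniqueness argument for $(\alpha_u,\beta_u)$ and the estimate $(\alpha_u,\beta_u)\in(0,1]^2$ under the hypothesis $\langle I_\lambda'(u),u^{\pm}\rangle\le 0$; with your sign those comparison inequalities run the wrong way. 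Second, in the two--nodal-domain step you assert $I_\lambda(v)\le I_\lambda(u)-I_\lambda(u_3)$ and $I_\lambda(u_3)>0$, but neither is justified: once there are three or more nodal pieces, the fractional cross terms $\int(-\Delta)^{s/2}u_i\,(-\Delta)^{s/2}u_3$ between pieces of the \emph{same} sign are negative, so $I_\lambda$ is not super-additive over this decomposition, and $u_3$ need not lie on any Nehari-type set. The paper's remedy is to carry \emph{all} cross terms (both fractional and Poisson) explicitly through the chain of inequalities and, at the last line, reassemble them into $I_\lambda(u_\lambda)=m_\lambda$; the relation $\langle I_\lambda'(u_\lambda),u_3\rangle=0$ is used not to prove $I_\lambda(u_3)>0$ on its own but to bound $I_\lambda(u_3)$ \emph{together with} its attached cross terms from below by $0$, which is precisely what the reassembly needs.
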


 Let
\begin{equation}
\mathcal{N}_{\lambda}:=\{u\in H\setminus \{0\}:  \langle I_{\lambda}'(u), u\rangle=0\},
\end{equation}
and
\begin{equation}
c_{\lambda}:=\underset{u\in \mathcal{N}_{\lambda}}{\inf}I_{\lambda}(u).
\end{equation}
Let $u_{\lambda}\in H$ be a sign-changing solution of problem (1.4), it is clear from (1.7) and (1.8) that $u_{\lambda}^{\pm}\not\in \mathcal{N}_{\lambda}$.
\begin{theorem}\label{gt621}
Under the assumptions of Theorem 1.1, $c_{\lambda}>0$ is achieved and $I_{\lambda}(u_{\lambda})>2c_{\lambda}$, where
$u_{\lambda}$ is the least energy sign-changing solution obtained in Theorem 1.1. In particular, $c_{\lambda}>0$ is achieved
 either by a positive or a negative function.
\end{theorem}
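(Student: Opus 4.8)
The plan is to run the usual Nehari--manifold analysis on $\mathcal{N}_{\lambda}$ to get that $c_{\lambda}>0$ is attained, then to exploit the strict ``nonlocal'' inequalities recalled just before Theorem 1.1 to separate $I_{\lambda}(u_{\lambda})$ from $2c_{\lambda}$, and finally to feed $I_{\lambda}(u_{\lambda})>2c_{\lambda}$ back in to rule out sign changes for a minimizer of $c_{\lambda}$. First I would set up the fibering picture on $\mathcal{N}_{\lambda}$: for $u\in H\setminus\{0\}$ the map $\tau\mapsto I_{\lambda}(\tau u)$ is positive near $0$ by $(f_1)$--$(f_2)$, tends to $-\infty$ by $(f_3)$ (whose $4$-superlinearity dominates the quartic $\phi$-term), and has a unique critical point $t_u>0$ by $(f_4)$, which is its strict global maximum; thus $t_u u$ is the unique point of $\mathcal{N}_{\lambda}$ on $\mathbb{R}^{+}u$, so $\mathcal{N}_{\lambda}\neq\emptyset$. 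Because the $\phi$-term is $4$-homogeneous, on $\mathcal{N}_{\lambda}$ one has $I_{\lambda}(u)=I_{\lambda}(u)-\tfrac14\langle I_{\lambda}'(u),u\rangle=\tfrac14\|u\|^{2}+\int_{\mathbb{R}^{3}}\big(\tfrac14 f(x,u)u-F(x,u)\big)dx$ with nonnegative integrand by $(f_4)$, and a routine use of $(f_1)$--$(f_2)$, Lemma 1.1 and $\int\phi_{u}^{t}u^{2}\geq0$ forces $\|u\|\geq\rho>0$ on $\mathcal{N}_{\lambda}$; hence $c_{\lambda}\geq\tfrac14\rho^{2}>0$.

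To show $c_{\lambda}$ is attained I would take a minimizing sequence $\{u_n\}\subset\mathcal{N}_{\lambda}$, bounded since $\tfrac14\|u_n\|^{2}\leq I_{\lambda}(u_n)\to c_{\lambda}$; passing to a subsequence, $u_n\rightharpoonup u$ in $H$ and, by Lemma 1.1$(ii)$, $u_n\to u$ in $L^{q}(\mathbb{R}^{3})$ for every $q\in[2,2_{s}^{*})$. Here the strict inequality $4s+2t>3$ is used to place $\tfrac{12}{3+2t}$ strictly below $2_{s}^{*}$, so that (via Hardy--Littlewood--Sobolev and Lemma 1.2$(2)$) $\int\phi_{u_n}^{t}u_n^{2}\to\int\phi_{u}^{t}u^{2}$; together with $\int f(x,u_n)u_n\to\int f(x,u)u$ and $\int F(x,u_n)\to\int F(x,u)$ from $(f_1)$--$(f_2)$, and $u\neq0$ (the Nehari identity keeps $\|u_n\|_{L^{p+1}}$ away from $0$ and $u_n\to u$ in $L^{p+1}$), weak lower semicontinuity of $\|\cdot\|$ and $u_n\in\mathcal{N}_{\lambda}$ give $\langle I_{\lambda}'(u),u\rangle\leq0$, hence some $t_u\in(0,1]$ with $t_u u\in\mathcal{N}_{\lambda}$. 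Using the monotonicity coming from $(f_4)$ one then squeezes $c_{\lambda}\leq I_{\lambda}(t_u u)\leq\tfrac14\|u\|^{2}+\int(\tfrac14 f(x,u)u-F(x,u))dx\leq\lim_{n}I_{\lambda}(u_n)=c_{\lambda}$, so $w:=t_u u$ realises $c_{\lambda}$.

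For the core inequality $I_{\lambda}(u_{\lambda})>2c_{\lambda}$, write $H_f(x,v)=\tfrac14 f(x,v)v-F(x,v)$; as above, $\langle I_{\lambda}'(v),v\rangle=0$ gives $I_{\lambda}(v)=\tfrac14\|v\|^{2}+\int H_f(x,v)dx$, and $(f_4)$ makes $v\mapsto H_f(x,v)$ nonnegative and nondecreasing on each of $(-\infty,0)$ and $(0,\infty)$. For $u_{\lambda}\in\mathcal{M}_{\lambda}$, the identity $\langle I_{\lambda}'(u_{\lambda}),u_{\lambda}^{+}\rangle=0$ together with the strict nonlocal inequality $\langle I_{\lambda}'(u_{\lambda}),u_{\lambda}^{+}\rangle>\langle I_{\lambda}'(u_{\lambda}^{+}),u_{\lambda}^{+}\rangle$ gives $\langle I_{\lambda}'(u_{\lambda}^{+}),u_{\lambda}^{+}\rangle<0$, so the fibering maximum of $u_{\lambda}^{+}$ sits at some $t_{+}\in(0,1)$, i.e. $t_{+}u_{\lambda}^{+}\in\mathcal{N}_{\lambda}$; likewise $t_{-}u_{\lambda}^{-}\in\mathcal{N}_{\lambda}$ with $t_{-}\in(0,1)$. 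Since $u_{\lambda}^{+}u_{\lambda}^{-}=0$ one has $H_f(x,u_{\lambda})=H_f(x,u_{\lambda}^{+})+H_f(x,u_{\lambda}^{-})$ and $\|u_{\lambda}\|^{2}=\|u_{\lambda}^{+}\|^{2}+\|u_{\lambda}^{-}\|^{2}+2\int(-\Delta)^{\frac{s}{2}}u_{\lambda}^{+}(-\Delta)^{\frac{s}{2}}u_{\lambda}^{-}dx$ with a strictly positive cross term, so, using $t_{\pm}<1$ and the monotonicity of $H_f$,
\[
2c_{\lambda}\leq I_{\lambda}(t_{+}u_{\lambda}^{+})+I_{\lambda}(t_{-}u_{\lambda}^{-})\leq\tfrac14\big(\|u_{\lambda}^{+}\|^{2}+\|u_{\lambda}^{-}\|^{2}\big)+\int H_f(x,u_{\lambda}^{+})dx+\int H_f(x,u_{\lambda}^{-})dx<I_{\lambda}(u_{\lambda}).
\]
For the last assertion, any minimizer $w$ of $c_{\lambda}$ on $\mathcal{N}_{\lambda}$ is a critical point of $I_{\lambda}$ (the Szulkin--Weth minimax scheme works with $(f_4)$ in place of $f\in C^{1}$); were $w$ sign-changing, $I_{\lambda}'(w)=0$ would give $\langle I_{\lambda}'(w),w^{\pm}\rangle=0$, so $w\in\mathcal{M}_{\lambda}$ and $c_{\lambda}=I_{\lambda}(w)\geq I_{\lambda}(u_{\lambda})>2c_{\lambda}$, impossible; hence $w$ has constant sign, and elliptic regularity plus the strong maximum principle for $(-\Delta)^{s}$ make it positive or negative.

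The main obstacle is that the fractional kinetic cross term and the quartic $\phi$-term destroy the naive splitting $I_{\lambda}(u)=I_{\lambda}(u^{+})+I_{\lambda}(u^{-})$; fortunately these are precisely the terms that produce the \emph{strict} gain upgrading $I_{\lambda}(u_{\lambda})\geq 2c_{\lambda}$ to a strict inequality, so the genuinely delicate points lie elsewhere: the compactness needed to attain $c_{\lambda}$, which hinges on Lemma 1.1$(ii)$ and on $4s+2t>3$ being strict so that the nonlocal term passes to the limit, and the verification that a Nehari minimizer is a genuine critical point of $I_{\lambda}$ even though $f$ is merely Carath\'eodory.
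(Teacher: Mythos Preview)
Your proposal is correct and follows essentially the same route as the paper: attain $c_{\lambda}$ by a Nehari minimizing-sequence argument mirroring Lemma~2.4, then project $u_{\lambda}^{\pm}$ onto $\mathcal{N}_{\lambda}$ with scalings in $(0,1]$ and compare. The one noteworthy difference is in how the \emph{strict} inequality $I_{\lambda}(u_{\lambda})>2c_{\lambda}$ is extracted. The paper uses the two-parameter maximization (its Lemma~2.3) together with the subadditivity $I_{\lambda}(\alpha u^{+})+I_{\lambda}(\beta u^{-})\leq I_{\lambda}(\alpha u^{+}+\beta u^{-})$ to write
\[
2c_{\lambda}\leq I_{\lambda}(\alpha_{u_{\lambda}^{+}}u_{\lambda}^{+})+I_{\lambda}(\beta_{u_{\lambda}^{-}}u_{\lambda}^{-})\leq I_{\lambda}(\alpha_{u_{\lambda}^{+}}u_{\lambda}^{+}+\beta_{u_{\lambda}^{-}}u_{\lambda}^{-})\leq I_{\lambda}(u_{\lambda}),
\]
leaving the strictness implicit. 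You instead use the representation $I_{\lambda}(v)=\tfrac14\|v\|^{2}+\int H_f(x,v)$ on $\mathcal{N}_{\lambda}$ and invoke the strictly positive fractional cross term $\int(-\Delta)^{s/2}u_{\lambda}^{+}(-\Delta)^{s/2}u_{\lambda}^{-}>0$ directly, which makes the strict gap completely transparent; this is a cleaner bookkeeping of the same mechanism. Your concluding step (a sign-changing Nehari minimizer would lie in $\mathcal{M}_{\lambda}$ and hence have energy $\geq m_{\lambda}>2c_{\lambda}$) matches the paper's, and your remarks on $4s+2t>3$ for compactness of the $\phi$-term and on the Szulkin--Weth device for the Nehari constraint fill in points the paper merely asserts.
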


It is clear that the energy of the sign-changing solution $u_{\lambda}$ obtained in Theorem1.1 depends on $\lambda$. As a by-product of this paper, we give a convergence property of $u_{\lambda}$  as $\lambda\searrow 0$, which reflects some relationship between $\lambda>0$ and $\lambda=0$ in problem (1.4).

\begin{theorem}\label{gt621}
If the assumptions of Theorem 1.1 hold, then for any sequence $\{\lambda_{n}\}$ with $\lambda_{n}\searrow 0$ as $n\rightarrow \infty$, there exists a subsequence, still denoted by $\{\lambda_{n}\}$, such that $u_{\lambda_{n}}\rightarrow u_{0}$ strongly in $H$ as $n\rightarrow \infty$, where $u_{0}$ is a least energy sign-changing solution of the problem
\begin{equation}
(-\Delta)^{s} u+V(x)u=f(x, u),\quad \text{in}\,\ \mathbb{R}^{3},
\end{equation}
which has precisely two
nodal domains.
\end{theorem}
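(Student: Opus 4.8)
\medskip
\noindent\textbf{Proof strategy for Theorem 1.3.}
Set $m_\lambda:=\inf_{u\in\mathcal{M}_\lambda}I_\lambda(u)$; the solution $u_\lambda$ of Theorem 1.1 is produced as a minimizer on $\mathcal{M}_\lambda$, so $I_\lambda(u_\lambda)=m_\lambda$. Let $I_0$ denote the functional of the limit problem (1.9), $\mathcal{M}_0$ the corresponding nodal set (nonempty: project any sign-changing function, exactly as for (1.4)) and $m_0:=\inf_{\mathcal{M}_0}I_0$. Fix once and for all a sign-changing $w\in\mathcal{M}_0$. Using the two-parameter fibre analysis developed for Theorem 1.1, for each $\lambda$ there is a unique $(s_\lambda,t_\lambda)\in(0,\infty)^2$ with $s_\lambda w^++t_\lambda w^-\in\mathcal{M}_\lambda$; since $(f_3)$ makes $(s,t)\mapsto I_\lambda(sw^++tw^-)$ tend to $-\infty$ as $s+t\to\infty$, uniformly for $\lambda$ bounded, the pairs $(s_{\lambda_n},t_{\lambda_n})$ remain bounded, hence $m_{\lambda_n}=I_{\lambda_n}(u_{\lambda_n})\le I_{\lambda_n}(s_{\lambda_n}w^++t_{\lambda_n}w^-)\le C_0$. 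As $u_{\lambda_n}$ solves (1.4) we have $\langle I_{\lambda_n}'(u_{\lambda_n}),u_{\lambda_n}\rangle=0$, and $(f_4)$ yields $\tfrac14 f(x,r)r-F(x,r)\ge0$ for all $r$, so
\begin{equation*}
C_0\ \ge\ I_{\lambda_n}(u_{\lambda_n})-\tfrac14\langle I_{\lambda_n}'(u_{\lambda_n}),u_{\lambda_n}\rangle\ =\ \tfrac14\|u_{\lambda_n}\|^2+\int_{\mathbb{R}^3}\Big(\tfrac14 f(x,u_{\lambda_n})u_{\lambda_n}-F(x,u_{\lambda_n})\Big)\,dx\ \ge\ \tfrac14\|u_{\lambda_n}\|^2,
\end{equation*}
and therefore $\{u_{\lambda_n}\}$ is bounded in $H$.

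Up to a subsequence $u_{\lambda_n}\rightharpoonup u_0$ in $H$, and by Lemma 1.1(ii) $u_{\lambda_n}\to u_0$ in $L^q(\mathbb{R}^3)$ for every $q\in[2,2_s^*)$ and a.e.\ in $\mathbb{R}^3$. Letting $n\to\infty$ in $\langle I_{\lambda_n}'(u_{\lambda_n}),\varphi\rangle=0$ for $\varphi\in H$: the bilinear part converges by weak convergence, $\int f(x,u_{\lambda_n})\varphi\to\int f(x,u_0)\varphi$ by $(f_1)$--$(f_2)$ and the strong $L^q$--convergence, while $|\lambda_n\int\phi_{u_{\lambda_n}}^t u_{\lambda_n}\varphi|\le \lambda_n\|\phi_{u_{\lambda_n}}^t\|_{\mathcal{D}^{t,2}}\|u_{\lambda_n}\varphi\|_{L^{(2^{*}_{t})'}}\to0$ since $\lambda_n\to0$ and the two factors stay bounded (Lemma 1.2(1) and the embedding $H\hookrightarrow L^{12/(3+2t)}$, where $4s+2t\ge3$ enters). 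Hence $I_0'(u_0)=0$. Next, since $u_{\lambda_n}$ is a critical point of $I_{\lambda_n}$ one has $\langle I_{\lambda_n}'(u_{\lambda_n}),u_{\lambda_n}^{\pm}\rangle=0$; discarding the nonnegative terms $\int(-\Delta)^{s/2}u_{\lambda_n}^+(-\Delta)^{s/2}u_{\lambda_n}^-\,dx$ and $\lambda_n\int\phi_{u_{\lambda_n}}^t(u_{\lambda_n}^{\pm})^2\,dx$ and invoking $(f_1)$--$(f_2)$ with Lemma 1.1 gives $\|u_{\lambda_n}^{\pm}\|^2\le \varepsilon C\|u_{\lambda_n}^{\pm}\|^2+C_\varepsilon\|u_{\lambda_n}^{\pm}\|^{p+1}$, hence a uniform lower bound $\|u_{\lambda_n}^{\pm}\|\ge\rho>0$. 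For the strong convergence, subtract $\langle I_0'(u_0),u_{\lambda_n}-u_0\rangle=o(1)$ from $\langle I_{\lambda_n}'(u_{\lambda_n}),u_{\lambda_n}-u_0\rangle=0$:
\begin{equation*}
\|u_{\lambda_n}-u_0\|^2=o(1)-\lambda_n\int_{\mathbb{R}^3}\phi_{u_{\lambda_n}}^t u_{\lambda_n}(u_{\lambda_n}-u_0)\,dx+\int_{\mathbb{R}^3}\big(f(x,u_{\lambda_n})-f(x,u_0)\big)(u_{\lambda_n}-u_0)\,dx,
\end{equation*}
and the right-hand side tends to $0$ (the $\phi$--term because $\lambda_n\to0$ with the remaining factor bounded in $L^{(2^{*}_{t})'}$, again by $4s+2t\ge3$; the last integral by the strong $L^q$--convergence and $(f_1)$--$(f_2)$). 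Thus $u_{\lambda_n}\to u_0$ strongly in $H$; since $u\mapsto u^{\pm}$ is continuous on $H$, $\|u_0^{\pm}\|=\lim\|u_{\lambda_n}^{\pm}\|\ge\rho>0$, so $u_0$ is a sign-changing weak solution of (1.9) and, being a critical point of $I_0$, $u_0\in\mathcal{M}_0$.

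It remains to check that $u_0$ has least energy among sign-changing solutions of (1.9), i.e.\ $I_0(u_0)=m_0$. By the strong convergence and $\lambda_n\to0$, $I_{\lambda_n}(u_{\lambda_n})\to I_0(u_0)$, and $u_0\in\mathcal{M}_0$ gives $I_0(u_0)\ge m_0$. For the reverse inequality, fix any $v\in\mathcal{M}_0$ and let $(s_n,t_n)\in(0,\infty)^2$ be the unique pair with $s_nv^++t_nv^-\in\mathcal{M}_{\lambda_n}$; as above $(s_n,t_n)$ is bounded, and the lower-bound estimate applied to $s_nv^++t_nv^-$ shows $s_n,t_n\ge\rho'>0$. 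Passing to a subsequence, $s_n\to s_\ast$, $t_n\to t_\ast$, and letting $n\to\infty$ (with $\lambda_n\to0$) in $\langle I_{\lambda_n}'(s_nv^++t_nv^-),s_nv^+\rangle=\langle I_{\lambda_n}'(s_nv^++t_nv^-),t_nv^-\rangle=0$ shows $s_\ast v^++t_\ast v^-\in\mathcal{M}_0$; by uniqueness of the fibre maximizer for $I_0$ and $v\in\mathcal{M}_0$ this forces $s_\ast=t_\ast=1$. Consequently $m_{\lambda_n}=I_{\lambda_n}(u_{\lambda_n})\le I_{\lambda_n}(s_nv^++t_nv^-)\to I_0(v)$, whence $I_0(u_0)=\lim m_{\lambda_n}\le I_0(v)$; taking the infimum over $v\in\mathcal{M}_0$ gives $I_0(u_0)\le m_0$. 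Therefore $I_0(u_0)=m_0$, and $u_0$ is a least energy sign-changing solution of (1.9).

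Finally, that $u_0$ has exactly two nodal domains is obtained word for word as in the proof of Theorem 1.1: if $u_0$ had three or more nodal domains, one splits off a sign-changing piece $v$ with $u_0-v\ne0$, projects it onto $\mathcal{M}_0$, and produces an element of $\mathcal{M}_0$ with energy strictly below $I_0(u_0)=m_0$, contradicting the definition of $m_0$. The principal difficulty in this plan is the third step: controlling the projections $(s_n,t_n)$ as $\lambda_n\searrow0$ --- their boundedness, their uniform positivity, and above all the identification of their limit with $(1,1)$ --- which is precisely where the uniqueness of the two-parameter fibre maximizer established for Theorem 1.1 is indispensable; the strong-convergence step, by comparison, is relatively soft because the delicate nonlocal term always carries the vanishing factor $\lambda_n$.
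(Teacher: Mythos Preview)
Your proposal is correct and follows essentially the same three–step route as the paper: a uniform bound on $m_{\lambda_n}$ obtained by projecting a fixed sign-changing function onto $\mathcal{M}_{\lambda_n}$ and using $I_\lambda-\tfrac14\langle I_\lambda',\cdot\rangle\ge\tfrac14\|\cdot\|^2$; passage to a weak subsequential limit $u_0$ that solves the limit equation with $u_0^\pm\ne0$; and an energy comparison via projection of an element of $\mathcal{M}_0$ onto $\mathcal{M}_{\lambda_n}$ together with the key limit $(s_n,t_n)\to(1,1)$.

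There are two minor differences worth recording. First, the paper projects the \emph{specific} minimizer $v_0\in\mathcal{M}_0$ (whose existence it claims by rerunning the Theorem~1.1 argument at $\lambda=0$) rather than a generic $v\in\mathcal{M}_0$, so no infimum over $\mathcal{M}_0$ is taken; your variant is logically equivalent and avoids the appeal to existence of $v_0$. Second, the paper never writes out the strong convergence $u_{\lambda_n}\to u_0$ in $H$: it stops at weak convergence plus $I_0(u_0)=I_0(v_0)$, whereas you derive strong convergence directly from the equation by subtracting $\langle I_0'(u_0),u_{\lambda_n}-u_0\rangle$ from $\langle I_{\lambda_n}'(u_{\lambda_n}),u_{\lambda_n}-u_0\rangle$. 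In this respect your argument is actually more complete than the published one. One small caveat: you invoke continuity of $u\mapsto u^\pm$ on $H$ to conclude $u_0^\pm\ne0$; this is true for fractional Sobolev spaces, but the paper sidesteps it by carrying the uniform lower bound to $\int|u_{\lambda_n}^\pm|^{p+1}\,dx$ and using the compact embedding, which you could also do if you prefer not to cite that continuity.
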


This paper is organized as follows. In Section2, we
present some preliminary lemmas which are essential for this paper. In Section 3, we give the proofs of Theorems 1.1--1.3 respectively.

\section{Some Technical Lemmas}\label{prel}
We will use constraint minimization on $\mathcal{M}_{\lambda}$ to look for a critical point of $I_{\lambda}$. For this, we start with this section by claiming that the set $\mathcal{M}_{\lambda}$ is nonempty in $H$.

\begin{lemma}
Assume that $(f_{1})-(f_{4})$ and $(V_{1})$ hold, if $u\in H$ with $u^{\pm}\neq 0$, then there exists a unique pair $(\alpha_{u}, \beta_{u})\in \mathbb{R}_{+}\times \mathbb{R}_{+}$ such that $\alpha_{u}u^{+}+\beta_{u}u^{-}\in \mathcal{M}_{\lambda}$.
\end{lemma}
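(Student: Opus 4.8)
The plan is to fix $u \in H$ with $u^\pm \neq 0$ and study the two-variable function $g:(0,\infty)\times(0,\infty)\to\mathbb{R}$ given by $g(\alpha,\beta)=I_\lambda(\alpha u^+ + \beta u^-)$. Since $(\alpha u^+) $ and $(\beta u^-)$ have disjoint supports, the fractional gradient terms split only partially: writing $A=\|(-\Delta)^{s/2}u^+\|_2^2 + \int V(u^+)^2$, $B=\|(-\Delta)^{s/2}u^-\|_2^2 + \int V(u^-)^2$, and $C = \int (-\Delta)^{s/2}u^+\,(-\Delta)^{s/2}u^-\,dx < 0$ (note the cross term is \emph{negative} for the nonlocal operator, the opposite sign from the local case), one computes
\begin{equation*}
\langle I_\lambda'(\alpha u^+ + \beta u^-), \alpha u^+\rangle = \alpha^2 A + 2\alpha\beta\,C + \lambda\alpha^4\!\int \phi^t_{u^+}(u^+)^2 + \lambda\alpha^2\beta^2\!\int \phi^t_{u^-}(u^+)^2 - \int f(x,\alpha u^+)\,\alpha u^+,
\end{equation*}
and symmetrically for the $u^-$ component, using $\phi^t_{\alpha u^+ + \beta u^-} = \alpha^2\phi^t_{u^+} + \beta^2\phi^t_{u^-}$ from Lemma 1.2(3) together with the fact that the mixed potential integrals are nonnegative. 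Thus $(\alpha_u, \beta_u)$ with $\alpha_u u^+ + \beta_u u^- \in \mathcal{M}_\lambda$ is exactly a solution of the system $\varphi_1(\alpha,\beta):=\langle I_\lambda'(\alpha u^+ + \beta u^-),\alpha u^+\rangle = 0$, $\varphi_2(\alpha,\beta):=\langle I_\lambda'(\alpha u^+ + \beta u^-),\beta u^-\rangle = 0$.

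For existence, I would run the standard argument: by $(f_1)$ and $(f_2)$, for small $\alpha,\beta>0$ the nonlinear term is dominated by the quadratic terms, and using $C<0$ one checks that on the diagonal-type region near the origin $\varphi_1>0$ and $\varphi_2>0$ (here I need $A + 2C \cdot(\beta/\alpha) > 0$, which holds for $\beta/\alpha$ bounded; the cross term actually helps make $\varphi_i$ positive only when it does not dominate — more carefully, near the origin the leading behaviour is $\alpha^2 A + 2\alpha\beta C$, so I take $\alpha=\beta=r$ small and use that $A+B+2C = \|u\|^2 > 0$ forces the right positivity on the diagonal). By $(f_3)$, $F(x,t)/t^4 \to \infty$, so for $\alpha$ large (with $\beta$ in a bounded range) the term $-\int F(x,\alpha u^+)$ of order beyond $\alpha^4$ dominates $\lambda\alpha^4\int\phi^t_{u^+}(u^+)^2$ and $\varphi_1 < 0$, similarly $\varphi_2<0$ for $\beta$ large. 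A Miranda (Poincaré–Bohl) fixed-point / degree argument on a large square $[\,1/R, R\,]^2$ then yields a zero $(\alpha_u,\beta_u)$ in the interior; one must check the boundary inequalities have the strict signs Miranda requires, which is where the estimates above are deployed.

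For uniqueness, the key device is $(f_4)$: the map $t\mapsto f(x,t)/|t|^3$ is strictly increasing on $\mathbb{R}\setminus\{0\}$, which implies $t\mapsto \tfrac14 f(x,t)t - F(x,t)$ is strictly increasing for $t>0$ and strictly decreasing for $t<0$, and that $\tfrac{f(x,\alpha\tau)}{\alpha^3}\tau$ is strictly increasing in $\alpha>0$ for $\tau\neq 0$. Suppose $(\alpha_1,\beta_1)$ and $(\alpha_2,\beta_2)$ both work; WLOG $\alpha_1 \le \alpha_2$. Dividing the first equation at $(\alpha_i,\beta_i)$ by $\alpha_i^4$ and subtracting, the quadratic term $A/\alpha_i^2$ is decreasing in $\alpha_i$, the cross term $2C\beta_i/\alpha_i^3$ must be handled, the Poisson self-term is constant, the mixed Poisson term $\lambda\beta_i^2/\alpha_i^2\int\phi^t_{u^-}(u^+)^2$ decreasing in $\alpha_i$, and $\int \frac{f(x,\alpha_i u^+)}{\alpha_i^3}(u^+)$ strictly increasing in $\alpha_i$ by $(f_4)$; a sign comparison forces $\alpha_1=\alpha_2$, then similarly $\beta_1=\beta_2$.

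The main obstacle is the cross term $C<0$: unlike the local Kirchhoff case of Shuai, where the analogous mixed gradient term is positive and makes the functional behave monotonically, here $C<0$ works \emph{against} us both in the existence step (one cannot simply say $\varphi_1(\alpha,\beta)>0$ for all small $\alpha$ independent of $\beta$) and in the uniqueness step (the term $2C\beta_i/\alpha_i^3$ is not monotone in a sign-favourable direction). I expect to resolve this by restricting attention, in the existence argument, to the region where $\alpha$ and $\beta$ are comparable — using that $\alpha u^+ + \beta u^-$ lies in $\mathcal{M}_\lambda$ forces an a priori two-sided bound $\beta/\alpha \in [c, 1/c]$ coming from $(f_1)$, $(f_2)$ and the coercivity of $\|\cdot\|$ — and in the uniqueness argument by adding the two rescaled equations so that the cross terms combine into a single expression $2C(\beta_i/\alpha_i^3 + \alpha_i/\beta_i^3)$ whose monotonicity can be controlled alongside the dominant $(f_4)$ term. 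I would also remark that $\alpha_u, \beta_u$ depend continuously on $u$, which is needed later, though that is not part of the present statement.
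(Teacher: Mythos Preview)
Your overall strategy---Miranda's theorem for existence, the monotonicity from $(f_4)$ for uniqueness---matches the paper's. The real problem is a sign error that drives the whole second half of your proposal in the wrong direction.

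You assert that
\[
C \;=\; \int_{\mathbb{R}^3} (-\Delta)^{s/2}u^{+}\,(-\Delta)^{s/2}u^{-}\,dx \;<\; 0,
\]
but in fact $C>0$. Using the Gagliardo representation (polarizing the identity $\|(-\Delta)^{s/2}v\|_2^2 = C_{3,s}\iint |v(x)-v(y)|^2\,|x-y|^{-3-2s}\,dx\,dy$), the cross term is a positive multiple of
\[
\iint_{\mathbb{R}^3\times\mathbb{R}^3}\frac{(u^+(x)-u^+(y))(u^-(x)-u^-(y))}{|x-y|^{3+2s}}\,dx\,dy,
\]
and the integrand is pointwise nonnegative: it vanishes unless $u(x)$ and $u(y)$ have opposite signs, in which case it equals $-u(x)u(y)>0$. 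The paper states this explicitly in the introduction. (Incidentally, the coefficient of $C$ in your formula for $\varphi_1$ should be $\alpha\beta$, not $2\alpha\beta$.)

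With the correct sign, your ``main obstacle'' evaporates and the paper's argument becomes available. For existence, $g_1(\alpha,\beta):=\langle I_\lambda'(\alpha u^++\beta u^-),\alpha u^+\rangle$ is \emph{increasing} in $\beta$ (both the fractional cross term and the Poisson cross term are nonnegative), so from $g_1(r,r)>0$ and $g_1(R,R)<0$ on the diagonal one immediately gets $g_1(r,\beta)>0$ and $g_1(R,\beta)<0$ for all $\beta\in[r,R]$; Miranda then applies on $[r,R]^2$ without any a~priori bound on $\beta/\alpha$. For uniqueness, the paper first reduces to the case $u\in\mathcal{M}_\lambda$ and shows $(1,1)$ is the only solution: assuming $0<\tilde\alpha\le\tilde\beta$, one replaces $\tilde\alpha\tilde\beta\,C$ by $\tilde\alpha^2 C$ and $\tilde\alpha^2\tilde\beta^2\int\phi^t_{u^-}(u^+)^2$ by $\tilde\alpha^4\int\phi^t_{u^-}(u^+)^2$ (both valid because these terms are nonnegative), divides by $\tilde\alpha^4$, and subtracts the $(1,1)$ equation; $(f_4)$ then forces $\tilde\alpha\ge 1$, and the symmetric argument on the second equation gives $\tilde\beta\le 1$. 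Your direct comparison of two solutions $(\alpha_1,\beta_1)$, $(\alpha_2,\beta_2)$ by subtracting the $\alpha_i^{-4}$-scaled equations is not obviously conclusive, since the terms $\beta_i/\alpha_i^3$ and $\beta_i^2/\alpha_i^2$ mix the two unknowns; the paper's reduction to $u\in\mathcal{M}_\lambda$ avoids this.

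In short: fix the sign of $C$, drop the workarounds, and your plan collapses to exactly the paper's proof.
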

\begin{proof}
Fixed an $u\in H$ with $u^{\pm}\neq 0$. We first establish the existence of $\alpha_{u}$ and $\beta_{u}$. Let
\begin{align}
g_{1}(\alpha, \beta)&=\langle I_{\lambda}'(\alpha u^{+}+\beta u^{-}), \alpha u^{+}\rangle\nonumber\\
&=\int_{\mathbb{R}^{3}}(-\Delta)^{\frac{s}{2}}(\alpha u^{+}+\beta u^{-})(-\Delta)^{\frac{s}{2}}(\alpha u^{+})dx +\alpha^{2}\int_{\mathbb{R}^{3}}V(x){u^{+}}^{2}dx\nonumber\\
&\quad +\lambda \alpha^{2}\int_{\mathbb{R}^{3}}\phi_{\alpha u^{+}+\beta u^{-}}^{t}{u^{+}}^{2} dx-\int_{\mathbb{R}^{3}}f(x,\alpha u^{+})\alpha u^{+} dx\nonumber\\
&=\alpha^{2}\int_{\mathbb{R}^{3}}\vert(-\Delta)^{\frac{s}{2}}u^{+}\vert^{2} dx +\alpha\beta \int_{\mathbb{R}^{3}}(-\Delta)^{\frac{s}{2}}u^{+}(-\Delta)^{\frac{s}{2}}u^{-}dx\nonumber\\
&\quad +\alpha^{2}\int_{\mathbb{R}^{3}}V(x){u^{+}}^{2}dx+\lambda \alpha^{4}\int_{\mathbb{R}^{3}}\phi_{u^{+}}^{t}{u^{+}}^{2}dx+\lambda \alpha^{2}\beta^{2}\int_{\mathbb{R}^{3}}\phi_{u^{-}}^{t}{u^{+}}^{2}dx\nonumber\\
&\quad -\int_{\mathbb{R}^{3}}f(x,\alpha u^{+})\alpha u^{+}dx,
\end{align}
and
\begin{align*}
g_{2}(\alpha, \beta)&=\langle I_{\lambda}'(\alpha u^{+}+\beta u^{-}), \beta u^{-}\rangle\nonumber\\
&=\int_{\mathbb{R}^{3}}(-\Delta)^{\frac{s}{2}}(\alpha u^{+}+\beta u^{-})(-\Delta)^{\frac{s}{2}}(\beta u^{-})dx +\beta^{2}\int_{\mathbb{R}^{3}}V(x){u^{-}}^{2}dx\nonumber\\
&\quad +\lambda \beta^{2}\int_{\mathbb{R}^{3}}\phi_{\alpha u^{+}+\beta u^{-}}^{t}{u^{-}}^{2} dx-\int_{\mathbb{R}^{3}}f(x,\beta u^{-})\beta u^{-} dx\nonumber\\
\end{align*}
\begin{align}
&\quad =\beta^{2}\int_{\mathbb{R}^{3}}\vert(-\Delta)^{\frac{s}{2}}u^{-}\vert^{2} dx +\alpha\beta \int_{\mathbb{R}^{3}}(-\Delta)^{\frac{s}{2}}u^{+}(-\Delta)^{\frac{s}{2}}u^{-}dx\nonumber\\
&\quad\quad +\beta^{2}\int_{\mathbb{R}^{3}}V(x){u^{-}}^{2}dx+\lambda \beta^{4}\int_{\mathbb{R}^{3}}\phi_{u^{-}}^{t}{u^{-}}^{2}dx+\lambda \alpha^{2}\beta^{2}\int_{\mathbb{R}^{3}}\phi_{u^{+}}^{t}{u^{-}}^{2}dx\nonumber\\
&\quad\quad -\int_{\mathbb{R}^{3}}f(x,\beta u^{-})\beta u^{-}dx.
\end{align}
By $(f_{1})$ and $(f_{3})$, it is easy to see that $g_{1}(\alpha, \alpha)>0$ and $g_{2}(\alpha, \alpha)>0$ for $\alpha>0$ small and
$g_{1}(\beta, \beta)<0$ and $g_{2}(\beta, \beta)<0$ for $\beta>0$ large. Thus, there exist $0<r<R$ such that
\begin{equation}
g_{1}(r, r)>0, \quad g_{2}(r, r)>0,\quad g_{1}(R, R)<0,\quad g_{2}(R, R)<0.
\end{equation}
From (2.1), (2.2) and (2.3), we have
\begin{equation*}
g_{1}(r, \beta)>0,  g_{1}(\beta, R)<0\quad \forall \beta\in [r, R]
\end{equation*}
and
\begin{equation*}
g_{2}(\alpha, r)>0,  g_{2}(\alpha, R)<0\quad \forall \alpha\in [r, R].
\end{equation*}
By virtue of Miranda's Theorem\cite{rM}, there exists some point $(\alpha_{u}, \beta_{u})$ with $r<\alpha_{u}, \beta_{u}<R$ such that
$g_{1}(\alpha_{u}, \beta_{u})=g_{2}(\alpha_{u}, \beta_{u})=0$. So  $\alpha_{u}u^{+}+\beta_{u}u^{-}\in\mathcal{M}_{\lambda}$.\\

Now, we prove the uniqueness of the pair $(\alpha_{u}, \beta_{u})$ and consider two cases.\\
\textbf{Case 1.} $u\in \mathcal{M}_{\lambda}$.\\
If $u\in \mathcal{M}_{\lambda}$, then $u^{+}+u^{-}=u\in \mathcal{M}_{\lambda}$. It means that
\begin{equation*}
 \langle I_{\lambda}'(u), u^{+}\rangle=\langle I_{\lambda}'(u), u^{-}\rangle=0,
\end{equation*}
that is
\begin{align}
&\int_{\mathbb{R}^{3}}\vert(-\Delta)^{\frac{s}{2}}u^{+}\vert^{2} dx + \int_{\mathbb{R}^{3}}(-\Delta)^{\frac{s}{2}}u^{+}(-\Delta)^{\frac{s}{2}}u^{-}dx
+\int_{\mathbb{R}^{3}}V(x){u^{+}}^{2}dx\nonumber\\
&+\lambda \int_{\mathbb{R}^{3}}\phi_{u^{+}}^{t}{u^{+}}^{2}dx+\lambda \int_{\mathbb{R}^{3}}\phi_{u^{-}}^{t}{u^{+}}^{2}dx
=\int_{\mathbb{R}^{3}}f(x, u^{+}) u^{+}dx,
\end{align}
and
\begin{align}
&\int_{\mathbb{R}^{3}}\vert(-\Delta)^{\frac{s}{2}}u^{-}\vert^{2} dx +\int_{\mathbb{R}^{3}}(-\Delta)^{\frac{s}{2}}u^{+}(-\Delta)^{\frac{s}{2}}u^{-}dx
+\int_{\mathbb{R}^{3}}V(x){u^{-}}^{2}dx\nonumber\\
&+\lambda \int_{\mathbb{R}^{3}}\phi_{u^{-}}^{t}{u^{-}}^{2}dx+\lambda \int_{\mathbb{R}^{3}}\phi_{u^{+}}^{t}{u^{-}}^{2}dx
=\int_{\mathbb{R}^{3}}f(x, u^{-}) u^{-}dx.
\end{align}
We show that $(\alpha_{u}, \beta_{u})=(1, 1)$ is the unique pair of numbers such that $\alpha_{u}u^{+}+\beta_{u}u^{-}\in\mathcal{M}_{\lambda}$.\\
Assume that $(\tilde{\alpha}_{u}, \tilde{\beta}_{u})$ is another pair of numbers such that $\tilde{\alpha}_{u}u^{+}+\tilde{\beta}_{u}u^{-}\in\mathcal{M}_{\lambda}$. By the definition of $\mathcal{M}_{\lambda}$, we have
\begin{align}
&\tilde{\alpha}_{u}^{2}\int_{\mathbb{R}^{3}}\vert(-\Delta)^{\frac{s}{2}}u^{+}\vert^{2} dx +\tilde{\alpha}_{u}\tilde{\beta}_{u} \int_{\mathbb{R}^{3}}(-\Delta)^{\frac{s}{2}}u^{+}(-\Delta)^{\frac{s}{2}}u^{-}dx+\tilde{\alpha}_{u}^{2}\int_{\mathbb{R}^{3}}V(x){u^{+}}^{2}dx \nonumber\\
&\quad \quad +\lambda \tilde{\alpha}_{u}^{4}\int_{\mathbb{R}^{3}}\phi_{u^{+}}^{t}{u^{+}}^{2}dx+\lambda \tilde{\alpha}_{u}^{2}\tilde{\beta}_{u}^{2}\int_{\mathbb{R}^{3}}\phi_{u^{-}}^{t}{u^{+}}^{2}dx
=\int_{\mathbb{R}^{3}}f(x,\tilde{\alpha}_{u} u^{+})\tilde{\alpha}_{u} u^{+}dx,
\end{align}
and
\begin{align}
&\tilde{\beta}_{u}^{2}\int_{\mathbb{R}^{3}}\vert(-\Delta)^{\frac{s}{2}}u^{-}\vert^{2} dx +\tilde{\alpha}_{u}\tilde{\beta}_{u} \int_{\mathbb{R}^{3}}(-\Delta)^{\frac{s}{2}}u^{+}(-\Delta)^{\frac{s}{2}}u^{-}dx
+\tilde{\beta}_{u}\int_{\mathbb{R}^{3}}V(x){u^{-}}^{2}dx\nonumber\\
&\quad \quad+\lambda \tilde{\beta}_{u}^{4}\int_{\mathbb{R}^{3}}\phi_{u^{-}}^{t}{u^{-}}^{2}dx+\lambda \tilde{\alpha}_{u}^{2}\tilde{\beta}_{u}^{2}\int_{\mathbb{R}^{3}}\phi_{u^{+}}^{t}{u^{-}}^{2}dx=\int_{\mathbb{R}^{3}}f(x,\tilde{\beta}_{u} u^{-})\tilde{\beta }_{u}u^{-}dx.
\end{align}
Without loss of generality, we may assume that $0<\tilde{\alpha}_{u}\leq \tilde{\beta}_{u}$. Then, from (2.6), we have
\begin{align*}
&\tilde{\alpha}_{u}^{2}\int_{\mathbb{R}^{3}}\vert(-\Delta)^{\frac{s}{2}}u^{+}\vert^{2} dx +\tilde{\alpha}_{u}^{2} \int_{\mathbb{R}^{3}}(-\Delta)^{\frac{s}{2}}u^{+}(-\Delta)^{\frac{s}{2}}u^{-}dx+\tilde{\alpha}_{u}^{2}\int_{\mathbb{R}^{3}}V(x){u^{+}}^{2}dx\\
&\quad \quad+\lambda \tilde{\alpha}_{u}^{4}\int_{\mathbb{R}^{3}}\phi_{u^{+}}^{t}{u^{+}}^{2}dx+\lambda \tilde{\alpha}_{u}^{4}\int_{\mathbb{R}^{3}}\phi_{u^{-}}^{t}{u^{+}}^{2}dx
\leq\int_{\mathbb{R}^{3}}f(x,\tilde{\alpha}_{u} u^{+})\tilde{\alpha}_{u} u^{+}dx.
\end{align*}
Moreover, we have
\begin{align}
&\tilde{\alpha}_{u}^{-2}\Big(\int_{\mathbb{R}^{3}}\vert(-\Delta)^{\frac{s}{2}}u^{+}\vert^{2} dx + \int_{\mathbb{R}^{3}}(-\Delta)^{\frac{s}{2}}u^{+}(-\Delta)^{\frac{s}{2}}u^{-}dx+\int_{\mathbb{R}^{3}}V(x){u^{+}}^{2}dx\Big)\nonumber\\
&\quad\quad+\lambda \int_{\mathbb{R}^{3}}\phi_{u^{+}}^{t}{u^{+}}^{2}dx+\lambda \int_{\mathbb{R}^{3}}\phi_{u^{-}}^{t}{u^{+}}^{2}dx
\leq\int_{\mathbb{R}^{3}}\frac{f(x,\tilde{\alpha}_{u} u^{+})}{\tilde{\alpha}_{u}^{3}} u^{+}dx.
\end{align}
By (2.8) and (2.4), one has
\begin{align}
&(\tilde{\alpha}_{u}^{-2}-1)\Big(\int_{\mathbb{R}^{3}}\vert(-\Delta)^{\frac{s}{2}}u^{+}\vert^{2} dx + \int_{\mathbb{R}^{3}}(-\Delta)^{\frac{s}{2}}u^{+}(-\Delta)^{\frac{s}{2}}u^{-}dx+\int_{\mathbb{R}^{3}}V(x){u^{+}}^{2}dx\Big)\nonumber\\
&\quad\quad\quad\quad\quad\quad \quad\quad\quad \quad\quad\quad \quad\quad\quad  \leq\int_{\mathbb{R}^{3}}\Big(\frac{f(x,\tilde{\alpha}_{u} u^{+})}{(\tilde{\alpha}_{u}u^{+})^{3}}- \frac{f(x, u^{+})}{(u^{+})^{3}}\Big)(u^{+})^{4}dx.
\end{align}
By $(f_{4})$ and (2.8), it implies that $1\leq \tilde{\alpha}_{u}\leq \tilde{\beta}_{u}$. By the same method,  we may get $\tilde{\beta}_{u}\leq 1$ by $(f_{4})$, (2.5) and (2.7), it shows
that $\tilde{\alpha}_{u}=\tilde{\beta}_{u}=1$.\\

\textbf{Case 2.} $u\not\in \mathcal{M}_{\lambda}$.\\
If $u\not\in \mathcal{M}_{\lambda}$, then there exists a pair of positive numbers $(\alpha_{u}, \beta_{u})$ such that
$\alpha_{u}u^{+}+\beta_{u}u^{-}\in\mathcal{M}_{\lambda}$. Suppose that there exists another pair of positive numbers $(\alpha'_{u}, \beta'_{u})$
such that $\alpha'_{u}u^{+}+\beta'_{u}u^{-}\in\mathcal{M}_{\lambda}$. Set $v:=\alpha_{u}u^{+}+\beta_{u}u^{-}$ and $v':=\alpha'_{u}u^{+}+\beta'_{u}u^{-}$, we have
\begin{equation*}
\frac{\alpha'_{u}}{\alpha_{u}}v^{+}+\frac{\beta'_{u}}{\beta_{u}}v^{-}=\alpha'_{u}u^{+}+\beta'_{u}u^{-}=v'\in \mathcal{M}_{\lambda}.
\end{equation*}
Since $v\in \mathcal{M}_{\lambda}$, we obtain that $\alpha_{u}=\alpha'_{u}$ and $\beta_{u}=\beta'_{u}$, which implies that $(\alpha_{u}, \beta_{u})$
is the unique pair of numbers such that $\alpha_{u}u^{+}+\beta_{u}u^{-}\in\mathcal{M}_{\lambda}$. The proof is completed.
\end{proof}

\begin{lemma}
Assume that $(f_{1})-(f_{4})$ and $(V)$ hold. For a fixed $u\in H$ with $u^{\pm}\neq 0$. If $\langle I_{\lambda}'(u), u^{+}\rangle\leq 0$ and $\langle I_{\lambda}'(u), u^{-}\rangle\leq 0$, then there exists a unique pair $(\alpha_{u}, \beta_{u})\in (0, 1]\times (0, 1]$ such that $\langle I_{\lambda}'(\alpha_{u}u^{+}+ \beta_{u}u^{-} ), \alpha_{u}u^{+}\rangle=\langle I_{\lambda}'(\alpha_{u}u^{+}+ \beta_{u}u^{-}), \beta_{u}u^{-}\rangle =0$.
\end{lemma}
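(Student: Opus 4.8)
The plan is to deduce the statement from Lemma 2.1 together with a monotonicity argument based on $(f_4)$. Since $u^{\pm}\neq 0$, Lemma 2.1 provides a \emph{unique} pair $(\alpha_u,\beta_u)\in\mathbb{R}_+\times\mathbb{R}_+$ with $w:=\alpha_u u^++\beta_u u^-\in\mathcal{M}_\lambda$. Because $\alpha_u,\beta_u>0$ we have $w^+=\alpha_u u^+$ and $w^-=\beta_u u^-$, so the membership $w\in\mathcal{M}_\lambda$ is exactly the assertion $\langle I_\lambda'(\alpha_u u^++\beta_u u^-),\alpha_u u^+\rangle=\langle I_\lambda'(\alpha_u u^++\beta_u u^-),\beta_u u^-\rangle=0$; conversely, any pair of positive numbers with this property makes $\alpha_u u^++\beta_u u^-\in\mathcal{M}_\lambda$, so its uniqueness is already contained in Lemma 2.1. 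Hence it remains only to prove the bound $\alpha_u\le 1$, $\beta_u\le 1$.

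For the bound I would argue by contradiction, treating the two coordinates symmetrically. Assume without loss of generality $0<\beta_u\le\alpha_u$ and suppose $\alpha_u>1$. Set
\[
A:=\int_{\mathbb{R}^3}|(-\Delta)^{\frac{s}{2}}u^+|^2\,dx+\int_{\mathbb{R}^3}(-\Delta)^{\frac{s}{2}}u^+(-\Delta)^{\frac{s}{2}}u^-\,dx+\int_{\mathbb{R}^3}V(x)(u^+)^2\,dx,
\]
\[
B:=\int_{\mathbb{R}^3}\phi_{u^+}^{t}(u^+)^2\,dx+\int_{\mathbb{R}^3}\phi_{u^-}^{t}(u^+)^2\,dx ;
\]
by $(V_1)$ we have $\|u^+\|^2>0$ and, as recalled in the Introduction, the fractional cross term is nonnegative, so $A>0$, while $B\ge 0$. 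Starting from the identity $g_1(\alpha_u,\beta_u)=0$ in the expanded form (2.1) and using $\alpha_u\beta_u\le\alpha_u^2$, $\alpha_u^2\beta_u^2\le\alpha_u^4$ on the nonnegative cross terms, we get
\[
\int_{\mathbb{R}^3}f(x,\alpha_u u^+)\,\alpha_u u^+\,dx\ \le\ \alpha_u^2 A+\lambda\alpha_u^4 B,
\]
hence, dividing by $\alpha_u^4$,
\[
\int_{\mathbb{R}^3}\frac{f(x,\alpha_u u^+)}{(\alpha_u u^+)^3}(u^+)^4\,dx\ \le\ \frac{A}{\alpha_u^2}+\lambda B .
\]
On the other hand, $\langle I_\lambda'(u),u^+\rangle\le 0$ reads $A+\lambda B\le\int_{\mathbb{R}^3}f(x,u^+)u^+\,dx=\int_{\mathbb{R}^3}\frac{f(x,u^+)}{(u^+)^3}(u^+)^4\,dx$. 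Since $\alpha_u>1$, $(f_4)$ gives $\frac{f(x,\alpha_u u^+)}{(\alpha_u u^+)^3}\ge\frac{f(x,u^+)}{(u^+)^3}$ on $\{u^+>0\}$, so comparing the last two displays yields $A+\lambda B\le A\alpha_u^{-2}+\lambda B$, i.e. $A(1-\alpha_u^{-2})\le 0$, which is impossible since $A>0$ and $\alpha_u>1$. Thus $\alpha_u\le 1$, and then $\beta_u\le\alpha_u\le 1$. In the remaining case $\alpha_u\le\beta_u$ the identical scheme, applied to $g_2(\alpha_u,\beta_u)=0$ in the form (2.2), to $\langle I_\lambda'(u),u^-\rangle\le 0$, and to the monotonicity of $t\mapsto f(x,t)/|t|^3$ for $t<0$, gives $\beta_u\le 1$ and then $\alpha_u\le\beta_u\le 1$. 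In either case $(\alpha_u,\beta_u)\in(0,1]\times(0,1]$.

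I expect the main bookkeeping burden to be tracking the several nonlocal contributions: checking that each of the reductions $\alpha_u\beta_u\le\alpha_u^2$ and $\alpha_u^2\beta_u^2\le\alpha_u^4$ is applied to a term of the correct (nonnegative) sign, and, in the negative part, using the identity $f(x,t)\,t=-(f(x,t)/|t|^3)\,|t|^4$ consistently so that $(f_4)$ is exploited in the right direction. The algebraic core — divide by the fourth power, invoke $(f_4)$, compare with the sign hypothesis — is then immediate.
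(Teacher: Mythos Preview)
Your argument is correct and follows essentially the same route as the paper's proof: invoke Lemma~2.1 for existence and uniqueness, assume $\beta_u\le\alpha_u$, bound the cross terms in the expansion (2.1) of $g_1(\alpha_u,\beta_u)=0$ by $\alpha_u^2$ and $\alpha_u^4$, divide by $\alpha_u^4$, and compare with the hypothesis $\langle I_\lambda'(u),u^+\rangle\le 0$ via $(f_4)$. Your write-up is in fact slightly more careful than the paper's, since you explicitly justify $A>0$ (using $(V_1)$ and the positivity of the fractional cross term recalled in the Introduction) and spell out the symmetric case $\alpha_u\le\beta_u$.
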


\begin{proof}
For $u\in H$ with $u^{\pm}\neq 0$, by Lemma 2.2, we know that there exist $\alpha_{u}$ and $\beta_{u}$ such that $\alpha_{u}u^{+}+\beta_{u}u^{-}\in \mathcal{M}_{\lambda}$. Without loss of generality, suppose that $\alpha_{u}\geq \beta_{u}>0$. Moreover, we have
\begin{align}
&\alpha_{u}^{2}\Big(\int_{\mathbb{R}^{3}}\vert(-\Delta)^{\frac{s}{2}}u^{+}\vert^{2} dx + \int_{\mathbb{R}^{3}}(-\Delta)^{\frac{s}{2}}u^{+}(-\Delta)^{\frac{s}{2}}u^{-}dx+\int_{\mathbb{R}^{3}}V(x){u^{+}}^{2}dx\Big)\nonumber\\
&\quad+\lambda \alpha_{u}^{4}\Big(\int_{\mathbb{R}^{3}}\phi_{u^{+}}^{t}{u^{+}}^{2}dx+\int_{\mathbb{R}^{3}}\phi_{u^{-}}^{t}{u^{+}}^{2}dx\Big)\nonumber\\
&\geq\alpha_{u}^{2}\int_{\mathbb{R}^{3}}\vert(-\Delta)^{\frac{s}{2}}u^{+}\vert^{2} dx +\alpha_{u}\beta_{u} \int_{\mathbb{R}^{3}}(-\Delta)^{\frac{s}{2}}u^{+}(-\Delta)^{\frac{s}{2}}u^{-}dx+\alpha_{u}^{2}\int_{\mathbb{R}^{3}}V(x){u^{+}}^{2}dx\nonumber\\
&\quad +\lambda \alpha_{u}^{4}\int_{\mathbb{R}^{3}}\phi_{u^{+}}^{t}{u^{+}}^{2}dx+\lambda \alpha_{u}^{2}\beta_{u}^{2}\int_{\mathbb{R}^{3}}\phi_{u^{-}}^{t}{u^{+}}^{2}dx\nonumber\\
&=\int_{\mathbb{R}^{3}}f(x,\alpha_{u} u^{+})\alpha_{u} u^{+}dx.
\end{align}
Since  $\langle I_{\lambda}'(u), u^{+}\rangle\leq 0$, it yields that
\begin{align}
&\int_{\mathbb{R}^{3}}\vert(-\Delta)^{\frac{s}{2}}u^{+}\vert^{2} dx + \int_{\mathbb{R}^{3}}(-\Delta)^{\frac{s}{2}}u^{+}(-\Delta)^{\frac{s}{2}}u^{-}dx+\int_{\mathbb{R}^{3}}V(x){u^{+}}^{2}dx\nonumber\\
&\quad \quad\quad+\lambda \int_{\mathbb{R}^{3}}\phi_{u^{+}}^{t}{u^{+}}^{2}dx+\lambda \int_{\mathbb{R}^{3}}\phi_{u^{-}}^{t}{u^{+}}^{2}dx\leq \int_{\mathbb{R}^{3}}f(x, u^{+})u^{+}dx.
\end{align}
Combine (2.10) and (2.11), we have
\begin{align*}
&\left(\frac{1}{\alpha_{u}^{2}}-1\right)\Big(\int_{\mathbb{R}^{3}}\vert(-\Delta)^{\frac{s}{2}}u^{+}\vert^{2} dx + \int_{\mathbb{R}^{3}}(-\Delta)^{\frac{s}{2}}u^{+}(-\Delta)^{\frac{s}{2}}u^{-}dx+\int_{\mathbb{R}^{3}}V(x){u^{+}}^{2}dx\Big)\\
&\quad\quad\quad\quad\quad\quad\geq \int_{\mathbb{R}^{3}}\Big(\frac{f(\alpha_{u}u^{+})}{(\alpha_{u}u^{+})^{3}}-\frac{f(u^{+})}{(u^{+})^{3}}\Big)(u^{+})^{4}dx.
\end{align*}
If $\alpha_{u}>1$, the left-hand side of this inequality  is negative. But from $(f_{4})$, the right-hand side of this inequality is positive, so have $\alpha_{u}\leq 1$.
The proof is thus complete.
\end{proof}

\begin{lemma}
For a fixed $u\in H$ with $u^{\pm}\neq 0$, then $(\alpha_{u}, \beta_{u})$ which obtained in Lemma 2.1 is the unique maximum point of the
function $\phi:\mathbb{R}_{+}\times \mathbb{R}_{+}\rightarrow \mathbb{R}$ defined as $\phi(s, t)=I_{\lambda}(\alpha u^{+}+\beta u^{-})$.
\end{lemma}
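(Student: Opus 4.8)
Write $\phi(\alpha,\beta)=I_{\lambda}(\alpha u^{+}+\beta u^{-})$ for $(\alpha,\beta)\in[0,\infty)\times[0,\infty)$. The plan is to show that $\phi$ is continuous, tends to $-\infty$ at infinity, is positive somewhere and has no maximum on the boundary of the quadrant; hence its maximum is attained at an interior critical point, and interior critical points of $\phi$ are exactly the points $(\alpha,\beta)$ with $\alpha u^{+}+\beta u^{-}\in\mathcal{M}_{\lambda}$, so Lemma 2.1 forces uniqueness and identifies the maximiser with $(\alpha_{u},\beta_{u})$.

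First I would record the explicit shape of $\phi$. Since $u^{+}u^{-}=0$ pointwise and using Lemma 1.2(3),
\[
\phi(\alpha,\beta)=I_{\lambda}(\alpha u^{+})+I_{\lambda}(\beta u^{-})+\alpha\beta\!\int_{\mathbb{R}^{3}}\!(-\Delta)^{\frac{s}{2}}u^{+}(-\Delta)^{\frac{s}{2}}u^{-}dx+\frac{\lambda}{2}\alpha^{2}\beta^{2}\Big(\int_{\mathbb{R}^{3}}\!\phi_{u^{+}}^{t}{u^{-}}^{2}dx+\int_{\mathbb{R}^{3}}\!\phi_{u^{-}}^{t}{u^{+}}^{2}dx\Big),
\]
where the last two ``cross'' quantities are nonnegative, and the first is strictly positive because $u^{\pm}\neq 0$ (exactly the nonlocal fact recalled in Section 1). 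Since $I_{\lambda}\in C^{1}(H,\mathbb{R})$ and $u\mapsto\phi_{u}^{t}$ is continuous, $\phi$ extends to a $C^{1}$ function on $\mathbb{R}^{2}$ with $\partial_{\alpha}\phi(\alpha,\beta)=\langle I_{\lambda}'(\alpha u^{+}+\beta u^{-}),u^{+}\rangle$ and $\partial_{\beta}\phi(\alpha,\beta)=\langle I_{\lambda}'(\alpha u^{+}+\beta u^{-}),u^{-}\rangle$.

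Next, coercivity. By $(f_{1})$ and $(f_{3})$, for every $M>0$ there is $C_{M}>0$ with $F(x,r)\geq M r^{4}-C_{M}r^{2}$ for a.e.\ $x$ and all $r$; hence $-\int_{\mathbb{R}^{3}}F(x,\alpha u^{+})\leq -M\alpha^{4}\int_{\mathbb{R}^{3}}(u^{+})^{4}dx+C_{M}\alpha^{2}\int_{\mathbb{R}^{3}}(u^{+})^{2}dx$, and similarly for $u^{-}$. Plugging this into the displayed identity, using $\alpha^{2}\beta^{2}\leq\tfrac12(\alpha^{4}+\beta^{4})$ to absorb the quartic $\phi^{t}$-terms, and choosing $M$ large (depending on $\lambda$ and $u$, and using $\int(u^{\pm})^{4}dx>0$), one gets $\phi(\alpha,\beta)\leq -c(\alpha^{4}+\beta^{4})+C(\alpha^{2}+\beta^{2})+C\to-\infty$ as $\alpha+\beta\to\infty$ in $[0,\infty)^{2}$. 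Thus $\phi$ attains a global maximum on $[0,\infty)^{2}$ at some $(\bar\alpha,\bar\beta)$. On the other hand, $(f_{1})$--$(f_{2})$ give $F(x,r)\leq\frac{\varepsilon}{2}r^{2}+C_{\varepsilon}|r|^{p+1}$, so Lemma 1.1 yields $\phi>0$ on a punctured neighbourhood of the origin; since $\phi(0,0)=I_{\lambda}(0)=0$, we get $\phi(\bar\alpha,\bar\beta)>0$ and $(\bar\alpha,\bar\beta)\neq(0,0)$.

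Now I would exclude the remaining boundary. Using $u^{+}u^{-}=0$, $f(x,0)=0$ and Lemma 1.2(3), a direct computation gives, for $\beta>0$, $\partial_{\alpha}\phi(0,\beta)=\langle I_{\lambda}'(\beta u^{-}),u^{+}\rangle=\beta\int_{\mathbb{R}^{3}}(-\Delta)^{\frac{s}{2}}u^{+}(-\Delta)^{\frac{s}{2}}u^{-}dx>0$, so no point of $\{0\}\times(0,\infty)$ maximises $\phi$; symmetrically for $(0,\infty)\times\{0\}$. Hence $\bar\alpha,\bar\beta>0$, so $\nabla\phi(\bar\alpha,\bar\beta)=0$. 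Setting $v:=\bar\alpha u^{+}+\bar\beta u^{-}$ we have $v^{+}=\bar\alpha u^{+}\neq0$, $v^{-}=\bar\beta u^{-}\neq0$ and $\langle I_{\lambda}'(v),v^{+}\rangle=\bar\alpha\,\partial_{\alpha}\phi(\bar\alpha,\bar\beta)=0$, $\langle I_{\lambda}'(v),v^{-}\rangle=\bar\beta\,\partial_{\beta}\phi(\bar\alpha,\bar\beta)=0$, i.e.\ $v\in\mathcal{M}_{\lambda}$; by the uniqueness in Lemma 2.1, $(\bar\alpha,\bar\beta)=(\alpha_{u},\beta_{u})$, so $(\alpha_{u},\beta_{u})$ is a global maximum point of $\phi$. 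Finally, the very same reasoning shows that \emph{any} global maximiser of $\phi$ lies in the open quadrant, is a critical point of $\phi$, and therefore (after the rescaling above) belongs to $\mathcal{M}_{\lambda}$; Lemma 2.1 then forces it to equal $(\alpha_{u},\beta_{u})$, giving uniqueness. I expect the coercivity step to be the main obstacle: one must ensure the superquartic growth from $(f_{3})$ dominates not only the pure $\alpha^{4},\beta^{4}$ contributions of $\phi_{u^{\pm}}^{t}$ but also the mixed $\alpha^{2}\beta^{2}$ term, uniformly over all directions to infinity — handled by $\alpha^{2}\beta^{2}\leq\frac12(\alpha^{4}+\beta^{4})$ together with the $M$-dependent lower bound on $F$ — while the boundary exclusion genuinely relies on the strict positivity $\int(-\Delta)^{\frac{s}{2}}u^{+}(-\Delta)^{\frac{s}{2}}u^{-}dx>0$ peculiar to the nonlocal setting.
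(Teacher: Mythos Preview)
Your proposal is correct and follows essentially the same route as the paper's proof: establish that $\phi$ is coercive (the paper invokes $(f_{3})$ without details, you supply the quantitative bound $F(x,r)\geq Mr^{4}-C_{M}r^{2}$ and control the mixed quartic term), rule out the boundary of the quadrant by showing $\phi$ increases when moving inward (the paper phrases this as ``$\phi(\alpha,\bar\beta)$ is increasing for small $\alpha$'', you make it precise via $\partial_{\alpha}\phi(0,\beta)=\beta\int(-\Delta)^{s/2}u^{+}(-\Delta)^{s/2}u^{-}\,dx>0$), and then appeal to the uniqueness of interior critical points from Lemma 2.1. Your additional step showing $\phi>0$ near the origin to exclude $(0,0)$ is a clean way to handle that corner, which the paper folds into the ``$\bar\beta\geq 0$'' case of its boundary argument.
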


\begin{proof}
From the proof of Lemma 2.1, we know that $(\alpha_{u}, \beta_{u})$ is the unique critical point of $\phi$ in $\mathbb{R}_{+} \times \mathbb{R}_{+}$.
By $(f_{3})$, we conclude that $\phi(s, t)\rightarrow -\infty$ uniformly as $\vert (s, t)\vert\rightarrow\infty$, so it is
sufficient to show that a maximum point cannot be achieved on the boundary of $(\mathbb{R}_{+}, \mathbb{R}_{+})$. If we assume that
$(0, \bar{\beta})$ is a maximum point of $\phi$ with $\bar{\beta}\geq 0$. Then since
\begin{align*}
\phi(\alpha, \bar{\beta})&=I_{\lambda}(\alpha u^{+}+\bar{\beta}u^{-})\\
&=\frac{1}{2}\int_{\mathbb{R}^{3}}\Big(\vert(-\Delta)^{\frac{s}{2}}(\alpha u^{+}+\beta u^{-})\vert^{2}+V(x)(\alpha u^{+}+\beta u^{-})^{2}\Big)dx\\
&+\lambda\int_{\mathbb{R}^{3}}\phi_{\alpha u^{+}+\beta u^{-}}^{t}(\alpha u^{+}+\beta u^{-})^{2}dx-\int_{\mathbb{R}^{3}}f(x,\alpha u^{+}+\beta u^{-})(\alpha u^{+}+\beta u^{-})dx
\end{align*}
is an increasing function with respect to $\alpha$ if $\alpha$ is small enough, the pair $(0, \bar{\beta})$ is not a maximum point of $\phi$ in
$\mathbb{R}_{+}\times \mathbb{R}_{+}$. The proof is now finished.
\end{proof}

By Lemma 2.1, we define the minimization problem
\begin{equation*}
m_{\lambda}:=\inf\Big\{I_{\lambda}(u): u\in \mathcal{M}_{\lambda}\Big\}.
\end{equation*}

\begin{lemma}
Assume that $(f_{1})-(f_{4})$ and $(V_{1})-(V_{2})$ hold, then $m_{\lambda}>0$ can be achieved for any $\lambda>0$.
\end{lemma}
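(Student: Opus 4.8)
The plan is to show that $m_\lambda:=\inf\{I_\lambda(u):u\in\mathcal M_\lambda\}$ is strictly positive and attained. I would split the argument into three stages: a uniform lower bound on $\|u^\pm\|$ over $\mathcal M_\lambda$, a coercivity/positivity estimate giving $m_\lambda>0$, and a compactness argument for a minimizing sequence using Lemma 1.1(ii).

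First I would establish that there is $\delta>0$ with $\|u^+\|\geq\delta$ and $\|u^-\|\geq\delta$ for all $u\in\mathcal M_\lambda$. Taking $u\in\mathcal M_\lambda$, the condition $\langle I_\lambda'(u),u^+\rangle=0$ together with the fact (noted in the introduction) that $\int (-\Delta)^{s/2}u^+(-\Delta)^{s/2}u^-\,dx>0$ and $\int\phi_{u^+}^t{u^+}^2,\int\phi_{u^-}^t{u^+}^2\geq0$ yields
\begin{equation*}
\|u^+\|^2\leq\int_{\mathbb R^3}f(x,u^+)u^+\,dx.
\end{equation*}
By $(f_1)$ and $(f_2)$, for any $\varepsilon>0$ there is $C_\varepsilon>0$ with $|f(x,u)u|\leq\varepsilon|u|^2+C_\varepsilon|u|^{p+1}$, so using the Sobolev embeddings of Lemma 1.1(i) one gets $\|u^+\|^2\leq\varepsilon C_2^2\|u^+\|^2+C_\varepsilon C_{p+1}^{p+1}\|u^+\|^{p+1}$; choosing $\varepsilon$ small and using $p+1>2$ forces $\|u^+\|\geq\delta$ for some $\delta>0$ independent of $u$, and symmetrically for $u^-$.

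Next, for $m_\lambda>0$: from $(f_3)$–$(f_4)$ one derives the standard superquadratic relation $\tfrac14 f(x,t)t-F(x,t)\geq0$ (indeed $(f_4)$ gives that $t\mapsto F(x,t)/t^4$ is increasing, hence $\tfrac14 f(x,t)t\ge F(x,t)$). Writing $I_\lambda(u)=I_\lambda(u)-\tfrac14\langle I_\lambda'(u),u\rangle$ for $u\in\mathcal M_\lambda$ (note $\langle I_\lambda'(u),u\rangle=\langle I_\lambda'(u),u^+\rangle+\langle I_\lambda'(u),u^-\rangle=0$), the $\phi_u^t$-term cancels and
\begin{equation*}
I_\lambda(u)=\frac14\|u\|^2+\int_{\mathbb R^3}\Big(\tfrac14 f(x,u)u-F(x,u)\Big)dx\geq\frac14\|u\|^2\geq\frac14\big(\|u^+\|^2+\|u^-\|^2\big)\geq\frac{\delta^2}{2}>0,
\end{equation*}
so $m_\lambda\geq\delta^2/2>0$.

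Finally, for attainment, take a minimizing sequence $\{u_n\}\subset\mathcal M_\lambda$. The identity above shows $\|u_n\|^2\leq 4I_\lambda(u_n)$ is bounded, so up to a subsequence $u_n\rightharpoonup u$ in $H$, and by Lemma 1.1(ii) $u_n\to u$ strongly in $L^q(\mathbb R^3)$ for $q\in[2,2_s^*)$; consequently $u_n^\pm\to u^\pm$ in those $L^q$ spaces. The $\|u_n^\pm\|\geq\delta$ lower bound combined with the inequality $\|u_n^+\|^2\leq\int f(x,u_n^+)u_n^+\,dx$ and $L^q$-convergence prevents $u^\pm$ from vanishing, so $u^\pm\neq0$. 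Then by Lemma 2.1 there is a unique pair $(\alpha_u,\beta_u)$ with $\alpha_u u^++\beta_u u^-\in\mathcal M_\lambda$; using Lemma 2.2 (the $\langle I_\lambda'(u),u^\pm\rangle\le 0$ case, which follows from weak lower semicontinuity of the norm and $L^q$-convergence of the nonlinear terms) one shows $\alpha_u,\beta_u\leq1$, and then by Lemma 2.3, weak lower semicontinuity, and the superquadratic term one obtains
\begin{equation*}
m_\lambda\leq I_\lambda(\alpha_u u^++\beta_u u^-)\leq\liminf_{n\to\infty}I_\lambda(\alpha_u u_n^++\beta_u u_n^-)\leq\liminf_{n\to\infty}I_\lambda(u_n^++u_n^-)=m_\lambda,
\end{equation*}
where the last inequality uses Lemma 2.3 (that $(1,1)$ maximizes $\phi$ for $u_n\in\mathcal M_\lambda$). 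Hence all inequalities are equalities, $\alpha_u=\beta_u=1$, and $I_\lambda(u)=m_\lambda$ with $u\in\mathcal M_\lambda$.

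The main obstacle I anticipate is the compactness step: showing $u^\pm\neq0$ in the limit and, more delicately, justifying the lower-semicontinuity chain for the $L^q$-convergence of $\int f(x,u_n^\pm)u_n^\pm\,dx$ and the continuity of $\int\phi_{u_n}^t u_n^2\,dx$ — here Lemma 1.1(ii) (strong $L^q$ convergence on all of $\mathbb R^3$, the payoff of $(V_2)$) and Lemma 1.2(6) are exactly what make the argument work, and the decomposition $I_\lambda-\tfrac14\langle I_\lambda',\cdot\rangle$ is essential because it removes the sign-indefinite nonlocal term from the coercivity estimate.
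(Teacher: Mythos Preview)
Your proposal is correct and follows essentially the same route as the paper: a uniform lower bound on $\|u^\pm\|$ over $\mathcal M_\lambda$, the coercivity identity $I_\lambda(u)=I_\lambda(u)-\tfrac14\langle I_\lambda'(u),u\rangle\ge\tfrac14\|u\|^2$, compactness of a minimizing sequence via Lemma~1.1(ii), the non-vanishing of $u_\lambda^\pm$, and the projection $(\alpha_{u_\lambda},\beta_{u_\lambda})\in(0,1]^2$ via Lemmas~2.1--2.2. The only minor difference is in the closing chain: the paper expands $I_\lambda(\bar u_\lambda)-\tfrac14\langle I_\lambda'(\bar u_\lambda),\bar u_\lambda\rangle$ and uses that $H(t)=tf(x,t)-4F(x,t)$ is nondecreasing in $|t|$ to compare directly with $\liminf\bigl[I_\lambda(u_n)-\tfrac14\langle I_\lambda'(u_n),u_n\rangle\bigr]$, whereas you invoke Lemma~2.3 on each $u_n$ together with weak lower semicontinuity of $I_\lambda$; both reach $m_\lambda\le I_\lambda(\alpha_{u_\lambda}u_\lambda^++\beta_{u_\lambda}u_\lambda^-)\le m_\lambda$, so this is a cosmetic variation. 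One small overclaim: your chain only yields that $m_\lambda$ is attained at $\alpha_u u^++\beta_u u^-$, not directly that $\alpha_u=\beta_u=1$; the latter (which the paper does obtain) requires tracing equality back through the weak-lower-semicontinuity step, but it is not needed for the lemma as stated.
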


\begin{proof}
For every $u\in \mathcal{M}_{\lambda}$, we have $\langle I'_{\lambda}(u), u\rangle=0$. From $(f_{1})$, $(f_{2})$, for any
$\epsilon>0$, there exists $C_{\epsilon}>0$ such that
\begin{equation}
\vert f(x, u)u\vert\leq \epsilon u^{2}+C_{\epsilon}\vert u\vert^{p+1} \quad \text {for all}\, u\in \mathbb{R}.
\end{equation}
By Sobolev embedding theorem, we get
\begin{align}
\Vert u\Vert^{2}&\leq \int_{\mathbb{R}^{3}}\Big(\vert(-\Delta)^{\frac{s}{2}}u\vert^{2}+V(x)u^{2}\Big)dx+\lambda\int_{\mathbb{R}^{3}}\phi_{u}^{t}u^{2}dx=\int_{\mathbb{R}^{3}}f(x, u)udx\nonumber\\
&\leq \epsilon \int_{\mathbb{R}^{3}}\vert u\vert^{2}dx+C_{\epsilon}\int_{\mathbb{R}^{3}}\vert u\vert^{p+1}dx\nonumber\\
&\leq C_{2}\epsilon \Vert u\Vert^{2}+C_{\epsilon}'\Vert u\Vert^{p+1}
\end{align}
Pick $\epsilon=\frac{1}{2C_{2}}$. So there exists a constant $\gamma>0$ such that $\Vert u\Vert^{2}>\gamma$.

By $(f_{4})$, we have
\begin{equation*}
f(x, u)u-4F(x, u)\geq 0.
\end{equation*}
Then
\begin{equation}
 I_{\lambda}(u)= I_{\lambda}(u)-\frac{1}{4}\langle I'_{\lambda}(u), u\rangle\geq \frac{\Vert u\Vert^{2}}{4}\geq \frac{\gamma}{4}.
\end{equation}
This implies that $I_{\lambda}(u)$ is coercive in $\mathcal{M}_{\lambda}$ and $m_{\lambda}\geq \frac{\gamma}{4}>0$.

Let $\{u_{n}\}_n\subset \mathcal{M}_{\lambda}$ be such that $ I_{\lambda}(u_{n})\rightarrow m_{\lambda}$. Then $\{u_{n}\}_n$ is bounded in $H$ by (2.14).
Using Lemma 1.1, up to a subsequence, we have
\begin{align}
&u_{n}^{\pm}\rightharpoonup u_{\lambda}^{\pm} \quad \text{weakly in}\, H,\nonumber\\
&u_{n}^{\pm}\rightarrow u_{\lambda}^{\pm} \quad \text{strongly in}\, L^{q}(\mathbb{R}^{3}),\quad \text{for}\,\, q\in [2, 2_{s}^{*}),\\
&u_{n}^{\pm}\rightarrow u_{\lambda}^{\pm} \quad \text{a.e.}\,\, \text{in}\, \, \mathbb{R}^{3}\nonumber\\,
&(-\Delta)^{\frac{s}{2}}u_{n}^{\pm}\rightarrow (-\Delta)^{\frac{s}{2}}u_{\lambda}^{\pm}\quad \text{a.e. in}\, \mathbb{R}^{3}.\nonumber
\end{align}
Moreover, the conditions $(f_{1})$, $(f_{2})$ and Lemma 1.1 imply that
\begin{equation}
 \underset{n\rightarrow\infty}{\lim} \int_{\mathbb{R}^{3}}f(x, u_{n}^{\pm})u_{n}^{\pm}dx=\int_{\mathbb{R}^{3}}f(x, u_{\lambda}^{\pm})u_{\lambda}^{\pm}dx,\quad\,\,
 \underset{n\rightarrow\infty}{\lim} \int_{\mathbb{R}^{3}}F(x, u_{n}^{\pm})dx=\int_{\mathbb{R}^{3}}F(x, u_{\lambda}^{\pm})dx.
\end{equation}
 Since $u_{n}\in \mathcal{M}_{\lambda}$, we have
$\langle I'_{\lambda}(u_{n}), u_{n}^{\pm}\rangle=0$, that is
\begin{align}
&\int_{\mathbb{R}^{3}}\vert(-\Delta)^{\frac{s}{2}}u_{n}^{+}\vert^{2} dx + \int_{\mathbb{R}^{3}}(-\Delta)^{\frac{s}{2}}u_{n}^{+}(-\Delta)^{\frac{s}{2}}u_{n}^{-}dx
+\int_{\mathbb{R}^{3}}V(x){u_{n}^{+}}^{2}dx\nonumber\\
&\quad\quad+\lambda \int_{\mathbb{R}^{3}}\phi_{u_{n}^{+}}^{t}{u_{n}^{+}}^{2}dx+\lambda \int_{\mathbb{R}^{3}}\phi_{u_{n}^{-}}^{t}{u_{n}^{+}}^{2}dx
=\int_{\mathbb{R}^{3}}f(x, u_{n}^{+}) u_{n}^{+}dx,
\end{align}
and
\begin{align}
&\int_{\mathbb{R}^{3}}\vert(-\Delta)^{\frac{s}{2}}u_{n}^{-}\vert^{2} dx +\int_{\mathbb{R}^{3}}(-\Delta)^{\frac{s}{2}}u_{n}^{+}(-\Delta)^{\frac{s}{2}}u_{n}^{-}dx
+\int_{\mathbb{R}^{3}}V(x){u_{n}^{-}}^{2}dx\nonumber\\
&\quad\quad+\lambda \int_{\mathbb{R}^{3}}\phi_{u_{n}^{-}}^{t}{u_{n}^{-}}^{2}dx+\lambda \int_{\mathbb{R}^{3}}\phi_{u_{n}^{+}}^{t}{u_{n}^{-}}^{2}dx
=\int_{\mathbb{R}^{3}}f(x, u_{n}^{-}) u_{n}^{-}dx.
\end{align}

Similar as (2.12) and (2.13),  we also have $\Vert u_{n}^{\pm}\Vert^{2}\geq \delta$ for all $n\in N$, where $\delta$ is a constant.

Since $u_{n}\in \mathcal{M}_{\lambda}$, by (2.17) and (2.18) again, we have
\begin{align*}
\delta\leq\Vert u_{n}^{\pm}\Vert^{2}<
\int_{\mathbb{R}^{3}}f(x, u_{n}^{\pm})u_{n}^{\pm} dx&\leq \epsilon\int_{\mathbb{R}^{3}}\vert u_{n}^{\pm}\vert^{2}dx+
C_{\epsilon}\int_{\mathbb{R}^{3}}\vert u_{n}^{\pm}\vert^{p+1}dx\\
&\leq \frac{\epsilon}{V_{0}}\int_{\mathbb{R}^{3}}\vert u_{n}^{\pm}\vert^{2}dx+
C_{\epsilon}\int_{\mathbb{R}^{3}}\vert u_{n}^{\pm}\vert^{p+1}dx.
\end{align*}
Using the boundedness of $\{u_{n}\}_n$, there is $C_{2}>0$ such that
\begin{equation*}
\delta\leq\epsilon C_{2}+C_{\epsilon}\int_{\mathbb{R}^{3}}\vert u_{n}^{\pm}\vert^{p+1}dx.
\end{equation*}
Choosing $\epsilon=\delta/(2C_{2})$, we get
\begin{equation}
\int_{\mathbb{R}^{3}}\vert u_{n}^{\pm}\vert^{p+1}dx\geq \frac{\delta}{2\bar{C}}.
\end{equation}
where $\bar{C}$ is a positive constant, in fact, $\bar{C}=C_{\frac{\delta}{2C_{2}}}$.

By (2.19) and Lemma 1.1 (ii),  we get
\begin{equation*}
\int_{\mathbb{R}^{3}}\vert u_{\lambda}^{\pm}\vert^{p+1}dx\geq \frac{\delta}{2\bar{C}}.
\end{equation*}
Thus, $u_{\lambda}^{\pm}\neq 0$. From Lemma 2.1, there exists $\alpha_{u_{\lambda}}$, $\beta_{u_{\lambda}}>0$ such that
\begin{equation*}
\bar{u}_{\lambda}:=\alpha_{u_{\lambda}}u_{\lambda}^{+}+\beta_{u_{\lambda}}u_{\lambda}^{-}\in \mathcal{M}_{\lambda}.
\end{equation*}
Now, we show that  $\alpha_{u_{\lambda}}$, $\beta_{u_{\lambda}}\leq 1$. By (2.15), (2.17), the weak semicontinuity of norm, Fatou's Lemma and Lemma 1.2,
 we have
\begin{align}
&\int_{\mathbb{R}^{3}}\vert(-\Delta)^{\frac{s}{2}}u_{\lambda}^{+}\vert^{2} dx + \int_{\mathbb{R}^{3}}(-\Delta)^{\frac{s}{2}}u_{\lambda}^{+}(-\Delta)^{\frac{s}{2}}u_{\lambda}^{-}dx
+\int_{\mathbb{R}^{3}}V(x){u_{\lambda}^{+}}^{2}dx\nonumber\\
&\quad \quad+\lambda \int_{\mathbb{R}^{3}}\phi_{u_{\lambda}^{+}}^{t}{u_{\lambda}^{+}}^{2}dx+\lambda \int_{\mathbb{R}^{3}}\phi_{u_{\lambda}^{-}}^{t}{u_{\lambda}^{+}}^{2}dx
\leq\int_{\mathbb{R}^{3}}f(x, u_{\lambda}^{+}) u_{\lambda}^{+}dx.
\end{align}
From (2.20) and Lemma 2.2, we have $\alpha_{u_{\lambda}}\leq 1$. Similarly, $\beta_{u_{\lambda}}\leq 1$.
The condition $(f_{4})$ implies that $H(u):=uf(x, u)-4F(x, u)$ is a non-negative function, increasing in $\vert u\vert$, so we have
\begin{align*}
m_{\lambda}&\leq I_{\lambda}(\bar{u}_{\lambda})=I_{\lambda}(\bar{u}_{\lambda})-\frac{1}{4}\langle  I'_{\lambda}(\bar{u}_{\lambda}), \bar{u}_{\lambda}\rangle\\
 &=\frac{1}{4}\Vert \bar{u}_{\lambda}\Vert^{2}+\frac{1}{4}\int_{\mathbb{R}^{3}}\Big(f(\bar{u}_{\lambda})\bar{u}_{\lambda}-4F(\bar{u}_{\lambda})\Big)dx\\
&= \frac{1}{4}\Vert \alpha_{u_{\lambda}}u_{\lambda}^{+}\Vert^{2}+\frac{1}{4}\Vert \beta_{u_{\lambda}}u_{\lambda}^{-}\Vert^{2}+\frac{\alpha_{u_{\lambda}} \beta_{u_{\lambda}}}{2}\int_{\mathbb{R}^{3}}(-\Delta)^{\frac{s}{2}}u_{\lambda}^{+}(-\Delta)^{\frac{s}{2}}u_{\lambda}^{-}dx\\
&+\frac{1}{4}\int_{\mathbb{R}^{3}}\Big(f(\alpha_{u_{\lambda}}u_{\lambda}^{+})\alpha_{u_{\lambda}}u_{\lambda}^{+}
-4F(x, \alpha_{u_{\lambda}}u_{\lambda}^{+})\Big)dx+\frac{1}{4}\int_{\mathbb{R}^{3}}\Big(f(x, \beta_{u_{\lambda}}u_{\lambda}^{-})\beta_{u_{\lambda}}u_{\lambda}^{-}
-4F(x, \beta_{u_{\lambda}}u_{\lambda}^{-})\Big)dx\\
&\leq \frac{1}{4}\Vert u_{\lambda}^{+}\Vert^{2}+\frac{1}{4}\Vert u_{\lambda}^{-}\Vert^{2}+\frac{1}{2}\int_{\mathbb{R}^{3}}(-\Delta)^{\frac{s}{2}}u_{\lambda}^{+}(-\Delta)^{\frac{s}{2}}u_{\lambda}^{-}dx\\
&+\frac{1}{4}\int_{\mathbb{R}^{3}}\Big(f(x, u_{\lambda}^{+})u_{\lambda}^{+}
-4F(x, u_{\lambda}^{+})\Big)dx+\frac{1}{4}\int_{\mathbb{R}^{3}}\Big(f(x, u_{\lambda}^{-})u_{\lambda}^{-}
-4F(x, u_{\lambda}^{-})\Big)dx\\
&\leq \underset{n\rightarrow \infty}{\lim\inf}\Big[I_{\lambda}(u_{n})-\frac{1}{4}\langle  I'_{\lambda}(u_{n}), u_{n}\rangle\Big]=m_{\lambda}.
\end{align*}
We then conclude that $\alpha_{u_{\lambda}}=\beta_{u_{\lambda}}=1$. Thus, $\bar{u}_{\lambda}=u_{\lambda}$ and $I_{\lambda}(u_{\lambda})=m_{\lambda}$.\\
\end{proof}

\section{Proof of Main Results} \label{pf1}

In this section, we are devoted to proving our main results.

\begin{proof}[Proof of Theorem 1.1]
We firstly prove that the minimizer $u_{\lambda}$ for the minimization problem is indeed a sign-changing solution of problem (1.4), that is, $I'_{\lambda}(u_{\lambda})=0$. For it, we will use the quantitative deformation lemma.

It is clear that $I'_{\lambda}(u_{\lambda})u_{\lambda}^{+}=0=I'_{\lambda}(u_{\lambda})u_{\lambda}^{-}$. From Lemma 2.2, for any $(\alpha, \beta)\in \mathbb{R}_{+}\times \mathbb{R}_{+}$ and
$(\alpha, \beta)\neq (1, 1)$,
\begin{equation*}
I_{\lambda}(\alpha u_{\lambda}^{+}+\beta u_{\lambda}^{-})<I_{\lambda}( u_{\lambda}^{+}+u_{\lambda}^{-})=m_{\lambda}.
\end{equation*}
If $I'_{\lambda}(u_{\lambda})\neq 0$, then there exist $\delta>0$ and $\kappa>0$ such that
\begin{equation*}
\Vert I'_{\lambda}(v)\Vert\geq \kappa \quad \text{for all} \,\Vert v-u_{\lambda}\Vert\leq 3\delta.
\end{equation*}
Let $D:=(\frac{1}{2}, \frac{3}{2})\times (\frac{1}{2}, \frac{3}{2})$ and $g(\alpha, \beta):=\alpha u_{\lambda}^{+}+\beta u_{\lambda}^{-}$. From Lemma 2.3, we also have
\begin{equation*}
\bar{m}_{\lambda}:=\underset{\partial D}{\max} I_{\lambda}\circ g<m_{\lambda}.
\end{equation*}
For $\epsilon:=\min\{(m_{\lambda}-\bar{m}_{\lambda})/2, \kappa\delta/8\}$ and $S:=B(u_{\lambda}, \delta)$, there is a deformation $\eta$ such that
\begin{itemize}
\item[$(a)$] $\eta(1, u)=u$ if $u\not\in I_{\lambda}^{-1}([m_{\lambda}-2\epsilon, m_{\lambda}+2\epsilon])\cap S_{2\delta}$;
\item[$(b)$] $\eta(1, I_{\lambda}^{m_{\lambda}+\epsilon}\cap S)\subset I_{\lambda}^{m_{\lambda}-\epsilon}$;
\item[$(c)$] $I_{\lambda}(\eta(1, u)))\leq I_{\lambda}(u)$ for all $u\in H$.
\end{itemize}
See \cite{rW} for more details. It is clear that
\begin{equation*}
\underset{(\alpha, \beta)\in \bar{D}}{\max}I_{\lambda}(\eta(1, g(\alpha, \beta))))<m_{\lambda}.
\end{equation*}
 Now we prove that $\eta(1, g(D))\cap \mathcal{M}_{\lambda}\neq \emptyset$ which contradicts to the definition of $m_{\lambda}$. Let us define
 $h(\alpha, \beta)=\eta(1, g(\alpha, \beta)))$ and
\begin{equation*}
\Psi_{0}(\alpha, \beta):=\Big(I'_{\lambda}(g(\alpha, \beta))u_{\lambda}^{+}, I'_{\lambda}(g(\alpha, \beta))u_{\lambda}^{-}\Big)=\Big(I'_{\lambda}(\alpha u_{\lambda}^{+}+\beta u_{\lambda}^{-})u_{\lambda}^{+}, I'_{\lambda}(\alpha u_{\lambda}^{+}+\beta u_{\lambda}^{-})u_{\lambda}^{-}\Big),
\end{equation*}
\begin{equation*}
\Psi_{1}(\alpha, \beta):=\Big(\frac{1}{\alpha}I'_{\lambda}(h(\alpha, \beta))h^{+}(\alpha, \beta), \frac{1}{\beta}I'_{\lambda}(h(\alpha, \beta))h^{-}(\alpha, \beta)\Big).
\end{equation*}
Lemma 2.1 and the degree theory imply that $\deg (\Psi_{0}, D, 0)=1$. it follows from that $g=h$ on $\partial D$. Consequently,
we obtain
\begin{equation*}
\deg (\Psi_{1}, D, 0)=\deg (\Psi_{0}, D, 0)=1.
\end{equation*}
 Thus, $\Psi_{1}(\alpha_{0}, \beta_{0})=0$ for some $(\alpha_{0}, \beta_{0})\in D$, so that
 \begin{equation*}
\eta(1, g(\alpha_{0}, \beta_{0})))=h(\alpha_{0}, \beta_{0})\in \mathcal{M}_{\lambda},
\end{equation*}
which is a contradiction. From this, $u_{\lambda}$ is a critical point of
$I_{\lambda}$, moreover, it is a sign-changing solution for problem (1.4).

Now we prove that $u_{\lambda}$ has exactly two nodal domains. By contradiction, we assume that $u_{\lambda}$ has at least three nodal domains
$\Omega_{1}$, $\Omega_{2}$, $\Omega_{3}$. Without loss generality, we may assume that $u_{\lambda}\geq 0$ a.e. in $\Omega_{1}$ and $u_{\lambda}\leq0$ a.e. in $\Omega_{2}$.
Set
\begin{equation*}
{u_{\lambda_{i}}}:=\chi_{\Omega_{i}}u_{\lambda}, \quad\quad i=1, 2, 3,
\end{equation*}
where
\begin{equation*}
\chi_{\Omega_{i}}=\left\{
\begin{array}{l}
1\quad \quad \quad   \quad x\in \Omega_{i},\\
0\quad \quad \quad\quad x\in \mathbb{R}^{3}\setminus \Omega_{i}.\\
\end{array}%
\right.
\end{equation*}%
So $\text{suppt}(u_{\lambda_{1}})\cap \text{suppt}(u_{\lambda_{2}})=\emptyset$, ${u_{\lambda_{i}}}\neq 0$ and $\langle I'(u_{\lambda}), {u_{\lambda_{i}}}\rangle=0$ for $i=1, 2, 3$. Assume that $v:=u_{\lambda_{1}}+u_{\lambda_{2}}$, then $v^{+}=u_{\lambda_{1}}$ and $v^{-}=u_{\lambda_{2}}$, i.e. $v^{\pm}\neq 0$. By Lemma 2.1,
there is a unique pair $(\alpha_{v}, \beta_{v})$ of positive numbers such that
\begin{equation*}
\alpha_{v}v^{+}+\beta_{v}v^{+}\in \mathcal{M}_{\lambda},
\end{equation*}
 so we have
 \begin{equation*}
I(\alpha_{v}{u_{\lambda_{1}}}+\beta_{v}{u_{\lambda_{2}}})\geq m_{\lambda}.
\end{equation*}
From $\langle I'(u_{\lambda}), {u_{\lambda_{i}}}\rangle=0$ for $i=1, 2, 3$, we have
 \begin{equation*}
\langle I'(v), v^{\pm}\rangle<0.
\end{equation*}
By Lemma 2.2, we know that $(\alpha_{v}, \beta_{v})\in (0, 1]\times (0, 1]$.
Since
\begin{align*}
0=\frac{1}{4}\langle  I'_{\lambda}(u_{\lambda}), {u_{\lambda_{3}}}\rangle&=\frac{1}{4}\Vert {u_{\lambda_{3}}}\Vert^{2}+\frac{1}{4} \int_{\mathbb{R}^{3}}(-\Delta)^{\frac{s}{2}}{u_{\lambda_{1}}}(-\Delta)^{\frac{s}{2}}{u_{\lambda_{3}}}dx+
\frac{1}{4} \int_{\mathbb{R}^{3}}(-\Delta)^{\frac{s}{2}}{u_{\lambda_{2}}}(-\Delta)^{\frac{s}{2}}{u_{\lambda_{3}}}dx \\
 &\quad +\frac{\lambda}{4}\int_{\mathbb{R}^{3}}\phi^{t}_{u_{\lambda_{1}}}{u_{\lambda_{3}}}^{2}dx
 +\frac{\lambda}{4}\int_{\mathbb{R}^{3}}\phi^{t}_{u_{\lambda_{2}}}{u_{\lambda_{3}}}^{2}dx
 +\frac{\lambda}{4}\int_{\mathbb{R}^{3}}\phi^{t}_{u_{\lambda_{3}}}{u_{\lambda_{3}}}^{2}dx\\
&\quad -\frac{1}{4}\int_{\mathbb{R}^{3}}f(x, {u_{\lambda_{3}}}){u_{\lambda_{3}}}dx \\
&\leq\frac{1}{4}\Vert {u_{\lambda_{3}}}\Vert^{2}+\frac{1}{4} \int_{\mathbb{R}^{3}}(-\Delta)^{\frac{s}{2}}{u_{\lambda_{1}}}(-\Delta)^{\frac{s}{2}}{u_{\lambda_{3}}}dx+
\frac{1}{4} \int_{\mathbb{R}^{3}}(-\Delta)^{\frac{s}{2}}{u_{\lambda_{2}}}(-\Delta)^{\frac{s}{2}}{u_{\lambda_{3}}}dx \\
 &\quad +\frac{\lambda}{4}\int_{\mathbb{R}^{3}}\phi^{t}_{u_{\lambda_{1}}}{u_{\lambda_{3}}}^{2}dx
 +\frac{\lambda}{4}\int_{\mathbb{R}^{3}}\phi^{t}_{u_{\lambda_{2}}}{u_{\lambda_{3}}}^{2}dx
 +\frac{\lambda}{4}\int_{\mathbb{R}^{3}}\phi^{t}_{u_{\lambda_{3}}}{u_{\lambda_{3}}}^{2}dx-\int_{\mathbb{R}^{3}}F(x, {u_{\lambda_{3}}})dx\\
&< I_{\lambda}({u_{\lambda_{3}}})++\frac{1}{4} \int_{\mathbb{R}^{3}}(-\Delta)^{\frac{s}{2}}{u_{\lambda_{1}}}(-\Delta)^{\frac{s}{2}}{u_{\lambda_{3}}}dx+
\frac{1}{4} \int_{\mathbb{R}^{3}}(-\Delta)^{\frac{s}{2}}{u_{\lambda_{2}}}(-\Delta)^{\frac{s}{2}}{u_{\lambda_{3}}}dx\\
&\quad +\frac{\lambda}{4}\int_{\mathbb{R}^{3}}\phi^{t}_{u_{\lambda_{1}}}{u_{\lambda_{3}}}^{2}dx
 +\frac{\lambda}{4}\int_{\mathbb{R}^{3}}\phi^{t}_{u_{\lambda_{2}}}{u_{\lambda_{3}}}^{2}dx.
\end{align*}
From $(f_{4})$, we have
\begin{align*}
m_{\lambda}&\leq I_{\lambda}(\alpha_{v}{u_{\lambda_{1}}}+\beta_{v}{u_{\lambda_{2}}})\\
&= I_{\lambda}(\alpha_{v}{u_{\lambda_{1}}}+\beta_{v}{u_{\lambda_{2}}})-\frac{1}{4}\langle I'_{\lambda}(\alpha_{v}{u_{\lambda_{1}}}+\beta_{v}{u_{\lambda_{2}}}), \alpha_{v}{u_{\lambda_{1}}}+\beta_{v}{u_{\lambda_{2}}}\rangle\\
&=\frac{\Vert \alpha_{v}{u_{\lambda_{1}}}+\beta_{v} {u_{\lambda_{2}}}\Vert^{2}}{4}+\int_{\mathbb{R}^{3}}\Big(\frac{1}{4}f(x, \alpha_{v}{u_{\lambda_{1}}})\alpha_{v}{u_{\lambda_{1}}}-F(x, \alpha_{v}{u_{\lambda_{1}}})\Big)dx\\
& \quad +\int_{\mathbb{R}^{3}}\Big(\frac{1}{4}f(x, \beta_{v}{u_{\lambda_{2}}})\beta_{v}{u_{\lambda_{2}}}-F(x, \beta_{v}{u_{\lambda_{2}}})\Big)dx\\
&\leq \frac{\Vert {u_{\lambda_{1}}}\Vert^{2}+\Vert {u_{\lambda_{2}}}\Vert^{2}}{4}+\frac{1}{2}\int_{\mathbb{R}^{3}}(-\Delta)^{\frac{s}{2}}{u_{\lambda_{1}}}(-\Delta)^{\frac{s}{2}}{u_{\lambda_{1}}}dx\\
&\quad +\int_{\mathbb{R}^{3}}\Big(\frac{1}{4}f(x,{u_{\lambda_{1}}}){u_{\lambda_{1}}}-F(x,{u_{\lambda_{1}}})\Big)dx+\int_{\mathbb{R}^{3}}\Big(\frac{1}{4}f(x, {u_{\lambda_{2}}}){u_{\lambda_{2}}}-F(x, {u_{\lambda_{2}}})\Big)dx\\
&=I_{\lambda}({u_{\lambda_{1}}})+I_{\lambda}({u_{\lambda_{2}}})
+\int_{\mathbb{R}^{3}}(-\Delta)^{\frac{s}{2}}{u_{\lambda_{1}}}(-\Delta)^{\frac{s}{2}}{u_{\lambda_{2}}}dx
+\frac{1}{4}\int_{\mathbb{R}^{3}}(-\Delta)^{\frac{s}{2}}{u_{\lambda_{3}}}(-\Delta)^{\frac{s}{2}}{u_{\lambda_{1}}}dx\\
&\quad +\frac{1}{4}\int_{\mathbb{R}^{3}}(-\Delta)^{\frac{s}{2}}{u_{\lambda_{3}}}(-\Delta)^{\frac{s}{2}}{u_{\lambda_{2}}}dx+
\frac{\lambda}{4}\int_{\mathbb{R}^{3}}\phi^{t}_{{u_{\lambda_{2}}}}{u_{\lambda_{1}}}^{2}dx
+\frac{\lambda}{4}\int_{\mathbb{R}^{3}}\phi^{t}_{{u_{\lambda_{3}}}}{u_{\lambda_{1}}}^{2}dx\\
&\quad+\frac{\lambda}{4}\int_{\mathbb{R}^{3}}\phi^{t}_{{u_{\lambda_{1}}}}{u_{\lambda_{2}}}^{2}dx
+\frac{\lambda}{4}\int_{\mathbb{R}^{3}}\phi^{t}_{{u_{\lambda_{3}}}}{u_{\lambda_{2}}}^{2}dx\\
&<I_{\lambda}({u_{\lambda_{1}}})+I_{\lambda}({u_{\lambda_{2}}})+I_{\lambda}({u_{\lambda_{3}}})+
\int_{\mathbb{R}^{3}}(-\Delta)^{\frac{s}{2}}{u_{\lambda_{1}}}(-\Delta)^{\frac{s}{2}}{u_{\lambda_{2}}}dx\\
&\quad +\int_{\mathbb{R}^{3}}(-\Delta)^{\frac{s}{2}}{u_{\lambda_{1}}}(-\Delta)^{\frac{s}{2}}{u_{\lambda_{3}}}dx+
\int_{\mathbb{R}^{3}}(-\Delta)^{\frac{s}{2}}{u_{\lambda_{2}}}(-\Delta)^{\frac{s}{2}}{u_{\lambda_{3}}}dx
+\frac{\lambda}{4}\int_{\mathbb{R}^{3}}\phi^{t}_{{u_{\lambda_{2}}}}{u_{\lambda_{1}}}^{2}dx
\end{align*}
\begin{align*}
&+\frac{\lambda}{4}\int_{\mathbb{R}^{3}}\phi^{t}_{{u_{\lambda_{3}}}}{u_{\lambda_{1}}}^{2}dx
+\frac{\lambda}{4}\int_{\mathbb{R}^{3}}\phi^{t}_{{u_{\lambda_{1}}}}{u_{\lambda_{2}}}^{2}dx
+\frac{\lambda}{4}\int_{\mathbb{R}^{3}}\phi^{t}_{{u_{\lambda_{3}}}}{u_{\lambda_{2}}}^{2}dx\nonumber\\
&+\frac{\lambda}{4}\int_{\mathbb{R}^{3}}\phi^{t}_{{u_{\lambda_{1}}}}{u_{\lambda_{3}}}^{2}dx+
\frac{\lambda}{4}\int_{\mathbb{R}^{3}}\phi^{t}_{{u_{\lambda_{2}}}}{u_{\lambda_{3}}}^{2}dx\nonumber \\
&=I_{\lambda}(u_{\lambda})=m_{\lambda},
\end{align*}
which is impossible, so $u_{\lambda}$ has exactly two nodal domains.
\end{proof}

\begin{proof}[Proof of Theorem 1.2]
Similar as the proof of Lemma 2.4, for each $\lambda>0$, we can get a $v_{\lambda}\in \mathcal{N}_{\lambda}$ such that
$I_{\lambda}(v_{\lambda})=c_{\lambda}>0$, where $\mathcal{N}_{\lambda}$ and $c_{\lambda}$ are defined by (1.5) and (1.6), respectively.
Moreover, the critical points of $I_{\lambda}$ on $\mathcal{N}_{\lambda}$ are the critical points of $I_{\lambda}$ in $H$. Thus, $v_{\lambda}$
is a ground state solution of problem (1.4).

From Theorem 1.1, we know that problem (1.4) has a least energy sign-changing solution $u_{\lambda}$ which changes sign only once. Suppose that
$u_{\lambda}=u_{\lambda}^{+}+u_{\lambda}^{-}$.  As the proof of Step 1 in Lemma 2.1, there is a unique $\alpha_{u_{\lambda}^{+}}>0$ such that
\begin{equation*}
\alpha_{u_{\lambda}^{+}}u_{\lambda}^{+}\in \mathcal{N}_{\lambda}.
\end{equation*}
Similarly, there exists a unique $\beta_{u_{\lambda}^{-}}>0$, such that
\begin{equation*}
\beta_{u_{\lambda}^{-}}u_{\lambda}^{-}\in \mathcal{N}_{\lambda}.
\end{equation*}
 Moreover, Lemma 2.2 implies that $\alpha_{u_{\lambda}^{+}}, \beta_{u_{\lambda}^{-}}\in (0, 1]$.
Therefore, by Lemma 2.3, we obtain that
\begin{equation*}
2c_{\lambda}\leq I_{\lambda}(\alpha_{u_{\lambda}^{+}}u_{\lambda}^{+})+I_{\lambda}(\beta_{u_{\lambda}^{-}}u_{\lambda}^{-}) \leq I_{\lambda}(\alpha_{u_{\lambda}^{+}}u_{\lambda}^{+}+\beta_{u_{\lambda}^{-}}u_{\lambda}^{-})\leq I_{\lambda}(u_{\lambda}^{+}+u_{\lambda}^{-})=m_{\lambda}
\end{equation*}
that is $I_{\lambda}(u_{\lambda}) \geq 2c_{\lambda}$. It follows that  $c_{\lambda}>0$ which cannot be achieved by a sign-changing function. This completes the proof.
\end{proof}

Now we prove Theorem 1.3.  In the following, we regard $\lambda>0$ as a parameter in problem (1.1). We shall study the convergence property of $u_{\lambda}$
as $\lambda\searrow 0$.

\begin{proof}[Proof of Theorem 1.3]

For any $\lambda>0$, let $u_{\lambda}\in H$ be the least energy sign-changing solution of problem (1.1) obtained in Theorem 1.1, which has exactly two nodal domains.

\vspace{2mm}

\textbf{Step 1.} We show that, for any sequence $\{\lambda_{n}\}_n$ with $\lambda_{n} \searrow 0$ as $n\rightarrow \infty$, $\{{u_{\lambda_{n}}}\}_n$
is bounded in $H$. \\

Choose a nonzero function $\varphi \in C_{0}^{\infty}(\mathbb{R}^{3})$ with $\varphi^{\pm}\neq 0$. By $(f_{3})$ and $(f_{4})$,  for any $\lambda\in[0, 1]$, there exists a pair $(\theta_{1}, \theta_{2})\in(\mathbb{R}_{+}\times \mathbb{R}_{+})$, which does not depend on $\lambda$, such that
\begin{equation*}
\langle I'_{\lambda}(\theta_{1}\varphi^{+}+\theta_{2}\varphi^{-}), \theta_{1}\varphi^{+}\rangle<0 \quad\text{that}\quad \langle I'_{\lambda}(\theta_{1}\varphi^{+}+\theta_{2}\varphi^{-}), \theta_{2}\varphi^{-}\rangle<0.
\end{equation*}
Then in view of Lemmas 2.1 and Lemma 2.2, for any $\lambda\in[0, 1]$, there is a unique pair $(\alpha_{\varphi}(\lambda), \beta_{\varphi}(\lambda) )\in(0, 1]\times(0, 1]$ such that $\bar{\varphi}:=\alpha_{\varphi}(\lambda)\theta_{1}\varphi^{+}+\beta_{\varphi}(\lambda)\theta_{2}\varphi^{-}\in \mathcal{M}_{\lambda}$. Thus, for all $\lambda\in[0, 1]$, we have
\begin{align*}
 I_{\lambda}(u_{\lambda})&\leq  I_{\lambda}(\bar{\varphi})
= I_{\lambda}(\bar{\varphi})-\frac{1}{4}\langle I'_{\lambda}(\bar{\varphi}), \bar{\varphi}\rangle\\
&=\frac{\Vert \bar{\varphi}\Vert^{2}}{4}+
\int_{\mathbb{R}^{3}}\Big(\frac{1}{4}f(x, \bar{\varphi})\bar{\varphi}- F(x, \bar{\varphi}) \Big)dx\\
&\leq \frac{\Vert \bar{\varphi}\Vert^{2}}{4}+
\int_{\mathbb{R}^{3}}\Big(C_{3}\vert \bar{\varphi}\vert^{2}+ C_{4}\vert \bar{\varphi}\vert^{p+1}\Big)dx\\
&\leq \frac{\Vert \theta_{1}\varphi^{+}\Vert^{2}}{4}+\frac{\Vert \theta_{2}\varphi^{-}\Vert^{2}}{4}+ \frac{1}{2} \int_{\mathbb{R}^{3}}(-\Delta)^{\frac{s}{2}}({\theta_{1}\varphi^{+}})(-\Delta)^{\frac{s}{2}}({\theta_{2}\varphi^{-}})dx\\
&\quad+\int_{\mathbb{R}^{3}}\Big(C_{3}\vert\theta_{1}\varphi^{+}\vert^{2}+ C_{4}\vert \theta_{1}\varphi^{+}\vert^{p+1}+C_{3}\vert\theta_{2}\varphi^{-}\vert^{2}+ C_{4}\vert \theta_{2}\varphi^{-}\vert^{p+1}\Big)dx\\
&=C_{0}.
\end{align*}
Moreover, for $n$ large enough, we obtain
\begin{equation*}
C_{0}+1\geq I_{\lambda_{n}}(u_{\lambda_{n}})=I_{\lambda_{n}}(u_{\lambda_{n}})-\frac{1}{4}\langle I'_{\lambda_{n}}(u_{\lambda_{n}}), u_{\lambda_{n}}\rangle\geq \frac{1}{4}\Vert u_{\lambda_{n}}\Vert^{2}.
\end{equation*}
So $\{{u_{\lambda}}_{n}\}_n$ is bounded in $H$.

\vspace{2mm}

\textbf{Step 2.} The problem has a sign-changing solution $u_{0}$.\\

By step 1 and Lemma 1.1, there exists a subsequence of $\{\lambda_{n}\}_n$, still denoted by $\{\lambda_{n}\}_n$ and $u_{0}\in H$ such that
\begin{align}
&u_{\lambda_{n}}\rightharpoonup u_{0}\quad \text{weakly in}\quad H,\nonumber\\
&u_{\lambda_{n}}\rightarrow u_{0}\quad \text{strongly in}\, L^{q}(\mathbb{R}^{3})\quad \text{for}\, q\in[2, 2_{s}^{*}),\\
&u_{\lambda_{n}}\rightarrow u_{0}\quad  \text{a.e. in}\,\, \mathbb{R}^{3}\nonumber.
\end{align}
Since $u_{\lambda_{n}}$ is the least energy sign-changing solution of (1.4) with $\lambda=\lambda_{n}$, then we have\\
\begin{align*}
 \int_{\mathbb{R}^{3}}\Big((-\Delta)^{\frac{s}{2}}u_{\lambda_{n}}(-\Delta)^{\frac{s}{2}}v+V(x)u_{\lambda_{n}}v\Big)dx+\lambda_{n}\int_{\mathbb{R}^{3}}\phi_{u_{\lambda_{n}}}^{t}u_{\lambda_{n}}v dx=\int_{\mathbb{R}^{3}}f(x,u_{\lambda_{n}})v dx.
\end{align*}
 for all $v\in C_{0}^{\infty}(\mathbb{R}^{3})$. From (3.1), we get that
 \begin{align*}
 \int_{\mathbb{R}^{3}}\Big((-\Delta)^{\frac{s}{2}}u_{0}(-\Delta)^{\frac{s}{2}}v+V(x)u_{0}v\Big)dx=\int_{\mathbb{R}^{3}}f(x,u_{0})v dx,
\end{align*}
for all $v\in C_{0}^{\infty}(\mathbb{R}^{3})$. So $u_{0}$ is a weak solution of (1.7). From a similar argument of the proof in Lemma 2.4,
 we know that $u_{0}^{\pm}\neq 0$.

\vspace{2mm}

\textbf{Step 3.} The problem (1.7) has a least energy sign-changing solution $v_{0}$, and there is a unique pair $(\alpha_{\lambda_{n}}, \beta_{\lambda_{n}} )\in \mathbb{R}^{+}\times\mathbb{R}^{+}$ such that $\alpha_{\lambda_{n}}{v_{0}}^{+}+ \beta_{\lambda_{n}}{v_{0}}^{-} \in \mathcal{M}_{\lambda}$. Moreover,
 $(\alpha_{\lambda_{n}}, \beta_{\lambda_{n}})\rightarrow (1, 1)$ as $n\rightarrow \infty$.\\

Via a similar argument in the proof of Theorem 1.1, there is a least energy sign-changing solution $v_{0}$ for problem (1.7) with two nodal domain, so we have
\begin{align}
\int_{\mathbb{R}^{3}}\vert(-\Delta)^{\frac{s}{2}}{v_{0}}^{+}\vert^{2} dx + \int_{\mathbb{R}^{3}}(-\Delta)^{\frac{s}{2}}{v_{0}}^{+}(-\Delta)^{\frac{s}{2}}{v_{0}}^{-}dx+\int_{\mathbb{R}^{3}}V(x){{v_{0}}^{+}}^{2}dx
=\int_{\mathbb{R}^{3}}f(x,{v_{0}}^{+}){v_{0}}^{+}dx,
\end{align}
and
\begin{align}
\int_{\mathbb{R}^{3}}\vert(-\Delta)^{\frac{s}{2}}{v_{0}}^{-}\vert^{2} dx + \int_{\mathbb{R}^{3}}(-\Delta)^{\frac{s}{2}}{v_{0}}^{+}(-\Delta)^{\frac{s}{2}}{v_{0}}^{-}dx
+\int_{\mathbb{R}^{3}}V(x){v_{0}^{-}}^{2}dx=\int_{\mathbb{R}^{3}}f(x,{v_{0}}^{-}){v_{0}}^{-}dx.
\end{align}
By Lemma 2.1, there exits an unique pair of $(\alpha_{\lambda_{n}}, \beta_{\lambda_{n}} )$ such that $\alpha_{\lambda_{n}}{v_{0}}^{+}+ \beta_{\lambda_{n}}{v_{0}}^{-} \in \mathcal{M}_{\lambda}$. So we have
\begin{align}
&\alpha_{\lambda_{n}}^{2}\int_{\mathbb{R}^{3}}\vert(-\Delta)^{\frac{s}{2}}{v_{0}}^{+}\vert^{2} dx +\alpha_{\lambda_{n}}\beta_{\lambda_{n}} \int_{\mathbb{R}^{3}}(-\Delta)^{\frac{s}{2}}{v_{0}}^{+}(-\Delta)^{\frac{s}{2}}{v_{0}}^{-}dx+\alpha_{\lambda_{n}}^{2}\int_{\mathbb{R}^{3}}V(x){{v_{0}}^{+}}^{2}dx \nonumber\\
&\quad \quad +\lambda_{n} \alpha_{\lambda_{n}}^{4}\int_{\mathbb{R}^{3}}\phi_{{v_{0}}^{+}}^{t}{{v_{0}}^{+}}^{2}dx+\lambda_{n} \alpha_{\lambda_{n}}^{2}\beta_{\lambda_{n}}^{2}\int_{\mathbb{R}^{3}}\phi_{{v_{0}}^{-}}^{t}{{v_{0}}^{+}}^{2}dx
=\int_{\mathbb{R}^{3}}f(x,\alpha_{\lambda_{n}}{v_{0}}^{+})\alpha_{\lambda_{n}} {v_{0}}^{+}dx,
\end{align}
and
\begin{align}
&\beta_{\lambda_{n}}^{2}\int_{\mathbb{R}^{3}}\vert(-\Delta)^{\frac{s}{2}}{v_{0}}^{-}\vert^{2} dx +\alpha_{\lambda_{n}}\beta_{\lambda_{n}} \int_{\mathbb{R}^{3}}(-\Delta)^{\frac{s}{2}}{v_{0}}^{+}(-\Delta)^{\frac{s}{2}}{v_{0}}^{-}dx
+\beta_{\lambda_{n}}^{2}\int_{\mathbb{R}^{3}}V(x){v_{0}^{-}}^{2}dx\nonumber\\
&\quad \quad+\lambda_{n} \beta_{\lambda_{n}}^{4}\int_{\mathbb{R}^{3}}\phi_{{v_{0}}^{-}}^{t}{{v_{0}}^{-}}^{2}dx+\lambda_{n} \alpha_{\lambda_{n}}^{2}\beta_{\lambda_{n}}^{2}\int_{\mathbb{R}^{3}}\phi_{{v_{0}}^{+}}^{t}{{v_{0}}^{-}}^{2}dx=\int_{\mathbb{R}^{3}}f(x,\beta_{\lambda_{n}} {v_{0}}^{-})\beta_{\lambda_{n}}{v_{0}}^{-}dx.
\end{align}
From $(f_{3})$ and $\lambda_{n}\rightarrow 0$ as $n\rightarrow\infty$, we get that the sequences $\{\alpha_{\lambda_{n}}\}$ and $\{\beta_{\lambda_{n}}\}$
are bounded. Assume that $\alpha_{\lambda_{n}}\rightarrow \alpha_{0}$ and $\beta_{\lambda_{n}}\rightarrow \beta_{0}$ as $n\rightarrow \infty$. From (2.16), (3.4) and (3.5), we have
\begin{align}
&\alpha_{0}^{2}\int_{\mathbb{R}^{3}}\vert(-\Delta)^{\frac{s}{2}}{v_{0}}^{+}\vert^{2} dx +\alpha_{0}\beta_{0} \int_{\mathbb{R}^{3}}(-\Delta)^{\frac{s}{2}}{v_{0}}^{+}(-\Delta)^{\frac{s}{2}}{v_{0}}^{-}dx+\alpha_{0}^{2}\int_{\mathbb{R}^{3}}V(x){{v_{0}}^{+}}^{2}dx \nonumber\\
&\quad\quad =\int_{\mathbb{R}^{3}}f(x,\alpha_{0}{v_{0}}^{+})\alpha_{0} {v_{0}}^{+}dx,
\end{align}
and
\begin{align}
&\beta_{0}^{2}\int_{\mathbb{R}^{3}}\vert(-\Delta)^{\frac{s}{2}}{v_{0}}^{-}\vert^{2} dx +\alpha_{0}\beta_{0} \int_{\mathbb{R}^{3}}(-\Delta)^{\frac{s}{2}}{v_{0}}^{+}(-\Delta)^{\frac{s}{2}}{v_{0}}^{-}dx
+\beta_{0}^{2}\int_{\mathbb{R}^{3}}V(x){v_{0}^{-}}^{2}dx\nonumber\\
&\quad \quad=\int_{\mathbb{R}^{3}}f(x,\beta_{0} {v_{0}}^{-})\beta_{0}{v_{0}}^{-}dx.
\end{align}
Moreover, by $(f_{3})$ and $(f_{4})$, we know that $\frac{f(x, s)}{\vert s\vert^{3}}$ is nondecreasing in $\vert s\vert$. So from (3.2), (3.3),
(3.6), (3.7), we obtain that $(\alpha_{0}, \beta_{0})=(1, 1)$.\\

Now we complete the proof of Theorem 1.3. We only need to show that $u_{0}$ obtained in step 2 is a least energy sign-changing solution of problem
(1.7).
By Lemma 2.3, we have
\begin{align*}
I_{0}(v_{0})\leq I_{0}(u_{0})&\leq\underset{n\rightarrow\infty}{\lim}I_{\lambda_{n}}(u_{\lambda_{n}})
=\underset{n\rightarrow\infty}{\lim}I_{\lambda_{n}}(u_{\lambda_{n}}^{+}+u_{\lambda_{n}}^{-})\\
&\leq \underset{n\rightarrow\infty}{\lim}I_{\lambda_{n}}(\alpha_{\lambda_{n}}{v_{0}}^{+}+\beta_{\lambda_{n}}{v_{0}}^{-})\\
&=I_{0}(v_{0}).
\end{align*}
This show that $u_{0}$ is a least energy sign-changing solution of problem (1.7) which has precisely two
nodal domains. We complete the proof of Theorem 1.3.
\end{proof}

\section*{Acknowledgements}
C. Ji is supported by NSFC (grant No. 11301181) and China Postdoctoral Science Foundation funded project.

\section*{References}


\begin{thebibliography}{99}

\bibitem{rBW} T. Bartsch, Z.Q. Wang,  \emph{Existence and multiplicity results for some superlinear elliptic problems on $\mathbb{R}^N$},
Comm. in Partial Differential Equations, 20 (1995), 1725--1741.

\bibitem{rBF1} V. Benci, D. Fortunato, \emph{An eigenvalue problem for the  Schr\"{o}dinger-Maxwell equations},
Topol. Methods Nonlinear Anal. 11 (1998), 283--293.

\bibitem{rBF2} V. Benci, D. Fortunato, \emph{Solitary waves of nonlinear Klein-Gordon equation coupled with Maxwell equations},
Rev. Math. Phys. 14 (2002), 409--420.


\bibitem{rBBL} R. Benguria, H. Brezis, E.H. Lieb, \emph{The Thoms-Fermi-von Weizs\"{a}cker theory of atoms and molecules},
Comm. Math. Phys. 79 (1981), 167--180.

\bibitem{rCS} L. Caffarelli, L. Silvestre, \emph{An extension problem related to the fractional Laplacian}, Comm. in Partial Differential Equations, 32 (2007), 1245--1260.

\bibitem{rCL} I. Catto, P.L. Lions, \emph{Binding of atoms and stability of molecules in Hartree and Thomas-Fermi theories. Part 1: A necessary
and sufficient condition for the stability of general molecular system}, Comm. in Partial Differential Equations, 17 (1992), 1051--1110.

\bibitem{rCT} R. Cont, P. Tankov, \emph{Financial Modeling with Jump Processes}, Chapman Hall/CRC Financial Mathematics Series, 2004, Boca Raton.

\bibitem{rCW} X.J. Chang, Z.Q. Wang, \emph{Ground state of scalar field equations involving a fractional Laplacian with general nonlinearity},
Nonlinearity,  26 (2013), 479--494.

\bibitem{rIV} I. Ianni,  G. Vaira, \emph{On concentration of positive bound states for the Schr\"{o}dinger-Poisson system with
potentials}, Adv. Nonlinear Stud. 8 (2008), 573--595.

\bibitem{rJZ} Y.S. Jiang, H.S. Zhou, \emph{Bound states for a stationary nonlinear Schr\"{o}dinger-Poisson system with sign-changing
potential in $\mathbb{R}^{3}$}, Acta Mathematica Scientia 29 (2009), 1095--1104.

\bibitem{rL1} N. Laskin, \emph{Fractional quantum mechanics and L\'{e}vy path integrals}, Phys. Lett. A 268 (2000) 298-305.

\bibitem{rL2} N. Laskin, \emph{Fractional Schr\"{o}dinger equations}, Phys. Rev. 66 (2002) 56-108.

\bibitem{rLPY} G.B. Li, S.J. Peng, S.S. Yan,  \emph{Infinitely many positive solutions for the nonlinear Schr\"{o}dinger-Poisson
system}, Commun. Contemp. Math. 12 (2010), 1069--1092.

\bibitem{rMK} R. Metzler, J.Klafter \emph{The random walls guide to anomalous diffusion: a fractional dynamics approach}, Phys. Rep. 339 (2000) 1-77.


\bibitem{rM} C. Miranda, \emph{Un'osservazione su un teorema di Brouwer},
Boll. Unione Mat. Ital. 3 (1940), 5-7.

\bibitem{rMR} G. Molica Bisci, V. R\u adulescu, \emph{Ground state solutions of scalar field fractional for Schr\"{o}dinger equations},
Calc. Var. Partial Differential Equations 54 (2015), 2985--3008.

\bibitem{rMRS} G. Molica Bisci, V. R\u adulescu, R. Servadei \emph{Variational Methods for Nonlocal Fractional Problems},
Cambridge: Cambridge University Press, 2016.

\bibitem{rPXZ} P. Pucci, M.Q. Xia, B.L. Zhang, \emph{Multiple solutions for nonhomogeneous Schr\"{o}dinger-Kirchhoff type equations involving the fractional
p-Laplacian in $\mathbb{R}^N$},
Calc. Var. Partial Differ. Equ. 54 (2015), 2785--2806.


\bibitem{rR} D. Ruiz,  \emph{The Schr\"{o}dinger-Poisson equation under the effect of a nonlinear local term}, J. Funct. Anal.
237 (2006), 655--674.

\bibitem{rSh} W. Shuai, \emph{Sign-changing solutions for a class of Kirchhoff-type problem in bounded domains}, J. Differential Equations,
259 (2015), 1256--1274.

\bibitem{rSi} L. Silvestre, \emph{Regularity of the obstacle problem for a fractional power of the Laplace operator}, Comm. Pure Appl. Math. 60 (2007) 67-112.

\bibitem{rT} K.M. Teng, \emph{Existence of ground state solutions for the nonlinear fractional Schr\"{o}dinger-Poisson system with critical
Sobolev exponent}, J. Differential Equations,
261 (2016), 3061--3106.

\bibitem{rWZ} Z.P. Wang, H.S. Zhou, \emph{Sign-changing solutions for the nonlinear
Schr\"{o}dinger-Poisson system in $\mathbb{R}^{3}$}, Calc. Var. Partial Differential Equations,  52(2015), 927--943.

\bibitem{rW} M. Willem, \emph{Minimax Theorems}, Birkh\"{a}user, Boston, 1996.

\bibitem{rZZX} X. Zhang, B.L. Zhang, M.Q. Xiang, \emph{Ground states for fractional
Schr$\ddot{\mbox{o}}$dinger equations involving a critical
nonlinearity}, Adv. Nonlinear Anal. 5 (2016), 293--314.

\bibitem{rZZR} X. Zhang, B.L. Zhang, D. Repov\v{s}, \emph{Existence and symmetry of solutions for critical fractional
Schr$\ddot{\mbox{o}}$dinger equations with bounded potentials},
Nonlinear Anal. 142 (2016), 48--68.










\end{thebibliography}
\end{document}